\numberwithin{equation}{section}
\theoremstyle{plain}
\newtheorem{theorem}{Theorem}
\newtheorem{lemma}{Lemma}
\newtheorem{corollary}{Corollary}
\newtheorem{proposition}{Proposition}
\theoremstyle{remark}
\newtheorem{remark}{Remark}
\begin{document}
\title{Matrix norm shrinkage estimators and priors}

\author[1]{Xiao Li}

\author[1,2]{Takeru Matsuda}

\author[1,2]{Fumiyasu Komaki}

\affil[1]{The University of Tokyo}

\affil[2]{RIKEN Center for Brain Science} 

\date{}

\maketitle

\begin{abstract}
We develop a class of minimax estimators for a normal mean matrix under the Frobenius loss, which generalizes the James--Stein and Efron--Morris estimators.
It shrinks the Schatten norm towards zero and works well for low-rank matrices.
We also propose a class of superharmonic  priors based on the Schatten norm, which generalizes Stein's prior and the singular value shrinkage prior. 
The generalized Bayes estimators and Bayesian predictive densities with respect to these priors are minimax.
We examine the performance of the proposed estimators and priors in simulation.
\end{abstract}

\section{Introduction}
Suppose that we have a matrix observation $X \in \mathbb{R}^{n \times m}$ whose entries are independent normal random variables $X_{ij} \sim {\rm N} (M_{ij},1)$, where $n \geq m$ and $M \in \mathbb{R}^{n \times m}$ is an unknown mean matrix. 	
In this setting, we consider estimation of $M$ under the Frobenius loss
\[
    l(M,\hat{M}) = \| \hat{M}-M \|_{\mathrm{F}}^2 = \sum_{i=1}^n \sum_{j=1}^m (\hat{M}_{ij}-M_{ij})^2.
\]
Although the maximum likelihood estimator $\hat{M}=X$ is minimax, it is inadmissible when $nm \geq 3$ from Stein's paradox \citep{stein1974estimation}.

There are two important minimax estimators that dominate the maximum likelihood estimator. 
One is the James--Stein (JS) estimator \citep{james1961estimation} given by
\begin{align}
\hat M_{\mathrm{JS}}=\Big(1-\frac{nm-2}{\| X \|_{\mathrm{F}}^2}\Big)X. \label{JS}
\end{align}
This estimator shrinks towards the zero matrix.
The other is the Efron--Morris (EM) estimator \citep{efron1972empirical} given by
\begin{align}
\hat M_{\mathrm{EM}}=X \left( I_m-(n-m-1)(X^\top X)^{-1} \right). \label{EM}
\end{align}
This estimator shrinks towards the space of low-rank matrices.
Namely, let $X = U S V^{\top}$, $U \in \mathbb{R}^{n \times m}$, $V \in \mathbb{R}^{m \times m}$, $S = {\rm diag} (s_1, \dots, s_m)$ be the singular value decomposition of $X$, where $U^{\top} U = V^{\top} V = I_m$ and
$ s_1 \geq \cdots \geq  s_m \geq 0$ are the singular values of $X$.
Then, $\hat{M}_{{\rm EM}} = U \hat{\Sigma} V^{\top}$ with $\hat{\Sigma} = {\rm diag} (\hat{\sigma}_1, \dots, \hat{\sigma}_m)$, where
\begin{align*}
	\hat{\sigma}_i = \left( 1 - \frac{n-m-1}{s_i^2} \right) s_i, \quad i=1, \ldots, m.
\end{align*}
Thus, the Efron--Morris estimator shrinks the singular values towards zero and works well when $M$ is close to low-rank (see \cite{matsuda2022estimation} for details).
Note that the number of nonzero singular values of a matrix is equal to its rank.
Thus, the James--Stein and Efron--Morris estimators can be viewed as scalar and matricial shrinkage estimators, respectively.
Recently, \cite{yuasa2023weighted} proposed a minimax estimator that combines these two estimators by using a weight determined by minimizing an unbiased estimate of risk. 

Correspondingly, there are two important superharmonic priors of $M$, where a prior $\pi(M)$ is said to be superharmonic if
\[
    \Delta \pi (M) = \sum_{i=1}^n \sum_{j=1}^m \frac{\partial^2 \pi}{\partial M_{ij}^2} (M) \leq 0
\]
for every $M$.
One is Stein's prior \citep{stein1974estimation} given by
\begin{align}
\pi_{\mathrm{S}}(M)=\| M \|_{\mathrm{F}}^{2-nm}, \label{stein}
\end{align}
for $nm \geq 3$.
The generalized Bayes estimator with respect to Stein's prior shrinks towards the origin like the James--Stein estimator \eqref{JS}.
The other is the singular value shrinkage (SVS) prior \citep{matsuda2015singular} given by
\begin{align}
\pi_{\mathrm{SVS}}(M)=\det(M^\top M)^{-(n-m-1)/2}, \label{SVS}
\end{align}
for $n-m \geq 2$.
The generalized Bayes estimator with respect to the singular value shrinkage prior shrinks the singular values towards zero like the Efron--Morris estimator \eqref{EM}.
Since a generalized Bayes estimator with respect to a superharmonic prior is minimax \citep{stein1974estimation}, the generalized Bayes estimators with respect to these priors are minimax under the Frobenius loss.
See \cite{tsukuma2008admissibility,tsukuma2017proper} for other types of minimax (generalized) Bayes estimators.
Similarly, the Bayesian predictive densities with respect to these superharmonic priors are minimax under the Kullback--Leibler loss \citep{komaki2001shrinkage,george2006improved,matsuda2015singular}.

In this study, we propose a broad class of  shrinkage estimators and priors by generalizing the above ones.
It is based on the (quasi-)norm of a matrix $A$ defined by
\begin{align}
\Vert A\Vert_{p}=\Big(\sum_{i=1}^m\sigma_i^p \Big)^{1/p}, \label{schatten}
\end{align}
for $p>0$, where $\sigma_1\ge\cdots\ge\sigma_m$ are the singular values of $A$. 
For $p \geq 1$, it is called the Schatten norm.
In particular, the Schatten norm with  $p=2$ coincides with the Frobenius norm and the Schatten norm with $p=1$ is called the nuclear norm, which is often used for low-rank regularization. 
Note that $\| \cdot \|_p$ with $p\in(0,1)$ is not a norm but a quasinorm.
We derive sufficient conditions for the proposed estimators to be minimax and for the proposed priors to be superharmonic.
Numerical results show their advantages compared to existing ones.

This paper is organized as follows.
In Section~\ref{sec:est}, we introduce a class of matrix norm shrinkage estimators. 
In Section~\ref{sec:prior}, we propose a class of matrix norm shrinkage priors associated with the estimators in Section~\ref{sec:est}. 
Numerical experiments are presented in Section~\ref{sec:experiment}.
Technical lemmas are given in the Appendix with proofs.

\section{Matrix norm shrinkage estimator}\label{sec:est}
Let $X=USV$ be a singular value decomposition of $X$, where ${S}=\mathrm{diag} (s_1,\dots,s_m)$ with $s_1 \geq \dots \geq s_m \geq 0$.
We consider an equivariant estimator
\begin{align}
\hat M=U \hat{\Sigma} V, \label{equiv}
\end{align}
where $\hat{\Sigma}=\mathrm{diag} (\hat{\sigma}_1(s),\dots,\hat{\sigma}_m(s))$ and $s=(s_1,\dots,s_m)$.
For convenience, we specify such an equivariant estimator by using $\hat{\sigma}_1,\dots,\hat{\sigma}_m$ in the following.
An unbiased estimate of its Frobenius risk has been derived as follows, which also appeared in \cite{tsukuma2008admissibility,candes2013unbiased}.

\begin{proposition} \citep{stein1974estimation,matsuda2019improved}\label{unbias}
    For an equivariant estimator $\hat M$ in \eqref{equiv} with $\hat{\sigma}_i=(1-\phi_i(s)) s_i$ for $i=1,\dots,m$,  its Frobenius risk is given by \begin{equation*}
    {\rm E}_M \left[ \| \hat{M}-M \|_{\mathrm{F}}^2 \right] = {\rm E}_M \left[ nm+\sum_{i=1}^m\Big\{s_i^2\phi_i^2-2(n-m+1)\phi_i-2s_i\frac{\partial\phi_i}{\partial s_i} \Big\}-4\sum_{i<j}\frac{s_i^2\phi_i-s_j^2\phi_j}{s_i^2-s_j^2} \right].
\end{equation*}
\end{proposition}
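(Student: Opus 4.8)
The plan is to derive the identity from Stein's unbiased risk estimate (SURE) and then reduce the divergence term to the stated spectral form. First I would expand the loss as
\[
\|\hat M - M\|_{\mathrm{F}}^2 = \|\hat M - X\|_{\mathrm{F}}^2 + 2\langle \hat M - X,\, X - M\rangle + \|X - M\|_{\mathrm{F}}^2,
\]
where $\langle A,B\rangle = \sum_{i,j} A_{ij}B_{ij}$, take expectations, and use ${\rm E}_M[\|X-M\|_{\mathrm{F}}^2] = nm$ together with the Gaussian integration-by-parts identity ${\rm E}_M[(X_{ij}-M_{ij})\,g(X)] = {\rm E}_M[\partial g/\partial X_{ij}]$ applied entrywise to $g = \hat M_{ij} - X_{ij}$. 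Since $\partial(\hat M_{ij}-X_{ij})/\partial X_{ij} = \partial \hat M_{ij}/\partial X_{ij} - 1$, summing over $i,j$ gives the SURE representation
\[
{\rm E}_M\big[\|\hat M - M\|_{\mathrm{F}}^2\big] = {\rm E}_M\Big[-nm + \|\hat M - X\|_{\mathrm{F}}^2 + 2\sum_{i=1}^n\sum_{j=1}^m \frac{\partial \hat M_{ij}}{\partial X_{ij}}\Big].
\]

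The first fitted-residual piece is immediate: because $\hat M - X = U(\hat\Sigma - S)V$ shares the singular vectors of $X$, orthogonal invariance of the Frobenius norm yields $\|\hat M - X\|_{\mathrm{F}}^2 = \sum_i(\hat\sigma_i - s_i)^2 = \sum_i s_i^2\phi_i^2$. The crux, which I expect to be the main obstacle, is evaluating the divergence $\sum_{i,j}\partial \hat M_{ij}/\partial X_{ij}$ for the equivariant estimator. Writing $\hat M_{ij} = \sum_k \hat\sigma_k(s)\,U_{ik}V_{kj}$, the derivative splits into a contribution from the singular values (through $\partial s_l/\partial X_{ij} = U_{il}V_{lj}$) and contributions from the infinitesimal rotation of the singular vectors $U$ and $V$. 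The vector perturbations carry denominators $s_k^2 - s_l^2$ for $k\neq l$, and an extra multiplicity $n-m$ arises from the orthogonal complement of the column space of $X$; this is the delicate SVD-perturbation computation (the matrix analogue of Stein's original argument), for which I would invoke the technical lemmas collected in the Appendix. Assembling the three types of terms produces the known divergence formula
\[
\sum_{i,j}\frac{\partial \hat M_{ij}}{\partial X_{ij}} = \sum_{i=1}^m \frac{\partial\hat\sigma_i}{\partial s_i} + (n-m)\sum_{i=1}^m\frac{\hat\sigma_i}{s_i} + 2\sum_{i<j}\frac{s_i\hat\sigma_i - s_j\hat\sigma_j}{s_i^2 - s_j^2}.
\]

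Finally I would substitute $\hat\sigma_i = (1-\phi_i)s_i$ throughout, using $\partial\hat\sigma_i/\partial s_i = 1 - \phi_i - s_i\,\partial\phi_i/\partial s_i$, $\hat\sigma_i/s_i = 1 - \phi_i$, and $(s_i\hat\sigma_i - s_j\hat\sigma_j)/(s_i^2 - s_j^2) = 1 - (s_i^2\phi_i - s_j^2\phi_j)/(s_i^2 - s_j^2)$. Plugging these into the SURE representation and bookkeeping the purely constant contributions, one checks that $-nm + 2m + 2(n-m)m + 2\binom{m}{2}\cdot 2$ collapses to $nm$ after cancellation, while the $\phi_i$-linear terms combine to $-2(n-m+1)\phi_i$. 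What remains is exactly the quadratic term $s_i^2\phi_i^2$, the derivative term $-2s_i\,\partial\phi_i/\partial s_i$, and the off-diagonal term $-4\sum_{i<j}(s_i^2\phi_i - s_j^2\phi_j)/(s_i^2 - s_j^2)$, which is the claimed expression. The only regularity point to flag is the applicability of Stein's identity, namely integrability and adequate growth control on $\hat M$, which holds under the mild smoothness assumed for the shrinkage factors $\phi_i$.
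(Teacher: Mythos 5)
The paper itself gives no proof of this proposition: it is imported wholesale from \cite{stein1974estimation} and \cite{matsuda2019improved}, so the only question is whether your blind derivation is sound. Its skeleton is, and it is the standard one underlying those references: the SURE decomposition via Stein's identity, orthogonal invariance giving $\|\hat M - X\|_{\mathrm{F}}^2 = \sum_i s_i^2\phi_i^2$, the divergence formula for spectral estimators, and then substitution of $\hat\sigma_i = (1-\phi_i)s_i$. Your final bookkeeping is also correct: the constants collapse as $-nm + 2m + 2(n-m)m + 2m(m-1) = nm$, the linear terms combine to $-2(n-m+1)\phi_i$, and the identity $(s_i\hat\sigma_i - s_j\hat\sigma_j)/(s_i^2 - s_j^2) = 1 - (s_i^2\phi_i - s_j^2\phi_j)/(s_i^2 - s_j^2)$ produces the off-diagonal term with the right factor of $4$.

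The genuine defect is where you discharge the hard step. You correctly single out the divergence formula
\[
\sum_{i,j}\frac{\partial \hat M_{ij}}{\partial X_{ij}} = \sum_{i=1}^m \frac{\partial\hat\sigma_i}{\partial s_i} + (n-m)\sum_{i=1}^m\frac{\hat\sigma_i}{s_i} + 2\sum_{i<j}\frac{s_i\hat\sigma_i - s_j\hat\sigma_j}{s_i^2 - s_j^2}
\]
as the crux, but then propose to justify it by ``the technical lemmas collected in the Appendix.'' The Appendix of this paper contains nothing of the kind: its lemmas are scalar inequalities --- monotonicity of $(x^p-1)/(x^2-1)$, a Bernoulli-type bound, Chebyshev's sum inequality and its consequences --- used to bound the unbiased risk estimate in Theorems \ref{norm-minimax} and \ref{norm-harmonic}, not SVD-perturbation identities. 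So as written, the central step of your argument rests on a source that does not exist, and the proof is incomplete. To close it you must either carry out the perturbation computation yourself (differentiate the SVD, using $\partial s_k/\partial X_{ij} = U_{ik}V_{kj}$, with the rotations of $U$ and $V$ producing the $1/(s_k^2-s_l^2)$ terms and the orthogonal complement of the column space of $X$ producing the multiplicity $n-m$), or cite it to a source that actually proves it, e.g.\ \cite{stein1974estimation}, \cite{candes2013unbiased}, or \cite{matsuda2019improved} --- which is precisely what the paper does for the proposition as a whole. A smaller omission: besides the integrability needed for Stein's identity, the divergence formula requires the singular values of $X$ to be distinct and positive; this holds almost surely under the Gaussian model and should be noted when the formula is applied.
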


By extending the James--Stein and Efron--Morris estimators, we introduce the equivariant estimator $\hat{M}_{p,\alpha}$ defined by
\[
\hat{\sigma}_i=\left( 1-\alpha\frac{s_i^{p-2}}{\sum_{j=1}^ms_j^p} \right) s_i, \quad i=1,\dots,m.
\]
Note that the James--Stein estimator \eqref{JS} corresponds to $p=2$ and $\alpha=nm-2$, whereas the Efron--Morris estimator \eqref{EM} corresponds to $p=0$ and $\alpha=m(n-m-1)$. 
This estimator attains minimaxity for an appropriate choice of $p$ and $\alpha$ as follows.

\begin{theorem} \label{norm-minimax}
   For $p\in[1,2]$, $\hat M_{p,\alpha}$ is minimax if $\alpha\in[0, 2nm-(2-p)m(m+1)-2p].$ 
   For $p\in(0,1)$, $\hat M_{p,\alpha}$ is minimax if $\alpha\in[0, 2nm-2m(m+1)+2(m-1)p].$
\end{theorem}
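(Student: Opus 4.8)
The plan is to apply Proposition~\ref{unbias} with the shrinkage factors $\phi_i(s)=\alpha\, s_i^{p-2}/T$, where $T=\sum_{j=1}^m s_j^p$, and to show that the resulting unbiased risk estimate never exceeds $nm$; since the maximum likelihood estimator attains the constant risk $nm$ and is minimax, this gives minimaxity of $\hat M_{p,\alpha}$. Computing the four ingredients of Proposition~\ref{unbias} (using $\partial T/\partial s_i=p\,s_i^{p-1}$ and $s_i^2\phi_i=\alpha s_i^p/T$) lets me factor out $\alpha/T\ge 0$ and reduces the claim to the pointwise inequality $E(s)\le 0$, where
\[
E(s)=(\alpha+2p)\frac{A}{T}-2(n-m+p-1)\,B-4C,\qquad A=\sum_{i}s_i^{2p-2},\ \ B=\sum_i s_i^{p-2},\ \ C=\sum_{i<j}\frac{s_i^p-s_j^p}{s_i^2-s_j^2}.
\]
Here $A/T$, $B$, $C$ are all homogeneous of degree $p-2$ in $s$, and $\partial E/\partial\alpha=A/T>0$, so it suffices to verify $E(s)\le 0$ at the right endpoint $\alpha=\alpha_{\max}$ for every $s_1\ge\cdots\ge s_m>0$.

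Next I would isolate two elementary estimates, each tight at the configuration $s_1=\cdots=s_m$. First, writing $A/T=\sum_i (s_i^p/T)\,s_i^{p-2}$ as a weighted average of the $s_i^{p-2}$: the weights $s_i^p/T$ are nonincreasing while $s_i^{p-2}$ is nondecreasing in $i$ (as $p\le2$), so Chebyshev's sum inequality gives $A/T\le B/m$. Second, the mean value theorem applied to $t\mapsto t^{p}$ against $t\mapsto t^2$ gives $(s_i^p-s_j^p)/(s_i^2-s_j^2)=\tfrac{p}{2}\xi_{ij}^{\,p-2}$ for some $\xi_{ij}\in[s_j,s_i]$; since $p-2\le 0$, each summand is at least $\tfrac{p}{2}s_i^{p-2}$, whence $C\ge\tfrac{p}{2}\sum_{i<j}s_i^{p-2}=\tfrac{p}{2}\sum_i (m-i)\,s_i^{p-2}$.

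For $p\in(0,1)$ these two estimates already suffice. Substituting them bounds $E(s)$ by $\sum_i c_i\, s_i^{p-2}$ with $c_i=(\alpha+2p)/m-2(n-m+p-1)-2p(m-i)$. Since $(s_i^{p-2})_i$ is a nonnegative nondecreasing sequence, and the cone of such sequences is generated by the suffix indicators $(0,\dots,0,1,\dots,1)$, the bound $\sum_i c_i s_i^{p-2}\le 0$ holds for all admissible $s$ provided every suffix sum $\sum_{i\ge j}c_i\le 0$. A short computation shows $\sum_{i\ge j}c_i=(m-j+1)\big[(\alpha+2p)/m-2(n-m+p-1)-p(m-j)\big]$, whose bracket is increasing in $j$; the binding constraint is therefore $j=m$, which reads $\alpha\le 2m(n-m+p-1)-2p=2nm-2m(m+1)+2(m-1)p$, exactly the stated bound.

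The case $p\in[1,2]$ is where the real obstacle lies. Here the two crude estimates are tight only to \emph{first} order at the all-equal point: one checks that at $\alpha=\alpha_{\max}$ the configuration $s_1=\cdots=s_m$ makes $E=0$, so the inequality is saturated there and no slack can be discarded, yet both $A/T\le B/m$ and $C\ge\tfrac{p}{2}\sum_{i<j}s_i^{p-2}$ lose a second-order amount near the diagonal; together they undershoot the target by exactly $pm(m-1)$, the gap between the two stated ranges. To close this gap I would keep $A/T$ unestimated and replace the mean value bound by the sharper symmetric Hermite--Hadamard lower bound $(s_i^p-s_j^p)/(s_i^2-s_j^2)\ge \tfrac{p}{2}\big((s_i^2+s_j^2)/2\big)^{(p-2)/2}$, which is second-order accurate at the diagonal. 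The remaining task is then the inequality $(\alpha+2p)A\le 2(n-m+p-1)BT+4CT$, which I would try to decompose into a sum over index pairs $\{i,j\}$ and prove pairwise, or, failing a clean pairwise split, establish directly that the all-equal configuration maximizes $E$ on each slice $\{T=\mathrm{const}\}$ by a Lagrange-multiplier or Schur-convexity argument; this global optimization is the step I expect to be genuinely delicate. A separate, more routine point to settle is finiteness of the risk (integrability of the unbiased estimate near small singular values), which for $p\ge 1$ is immediate since $\hat\sigma_i=s_i-\alpha s_i^{p-1}/T$ then stays bounded.
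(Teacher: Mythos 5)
Your reduction to the pointwise inequality $E(s)\le 0$ is exactly the paper's reduction, and your treatment of the case $p\in(0,1)$ is correct: Chebyshev gives $mA\le BT$, the mean-value bound handles the cross term, and your suffix-sum argument yields the stated range. (In fact your binding constraint $j=m$ shows the cross term contributes nothing at the endpoint; the paper exploits this by simply discarding the cross term, which is nonnegative, and applying only Chebyshev's sum inequality, Lemma~\ref{chebyshev}.)

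However, for $p\in[1,2]$ --- the main case, containing the nuclear norm shrinkage estimator at $p=1$ --- you have not given a proof. You correctly diagnose that the crude bounds fall short by exactly $pm(m-1)$ and correctly identify the target inequality $(\alpha+2p)A\le 2(n-m+p-1)BT+4CT$, but the completion is only sketched (``I would try \dots failing a clean pairwise split \dots genuinely delicate''). The missing ingredient is precisely the inequality
\[
\Bigl(\sum_{i=1}^m s_i^p\Bigr)\sum_{i<j}\frac{s_i^p-s_j^p}{s_i^2-s_j^2}\ \ge\ \frac{pm(m-1)}{4}\sum_{i=1}^m s_i^{2p-2},
\]
which is the paper's Lemma~\ref{nns-inq}. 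The paper proves it in two steps: a pairwise bound $\frac{s_i^{2p}-s_j^{2p}}{s_i^2-s_j^2}\ge\frac{p}{2}\bigl(s_i^{2p-2}+s_j^{2p-2}\bigr)$ valid for $p\in[1,2]$ (Lemma~\ref{bernoulli}), and a coupling step (Lemma~\ref{chebyshev-2}) showing $\bigl(\sum_i s_i^p\bigr)\sum_{i<j}\frac{s_i^p-s_j^p}{s_i^2-s_j^2}\ge\frac{m}{2}\sum_{i<j}\frac{s_i^{2p}-s_j^{2p}}{s_i^2-s_j^2}$ via a double application of Chebyshev's sum inequality, using the monotonicity of $x\mapsto(x^p-1)/(x^2-1)$ (Lemma~\ref{nns-dec}). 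Your proposed Hermite--Hadamard bound $\frac{s_i^p-s_j^p}{s_i^2-s_j^2}\ge\frac{p}{2}\bigl((s_i^2+s_j^2)/2\bigr)^{(p-2)/2}$ is true and second-order accurate at the diagonal, but it does not by itself close the gap: after applying it one must still couple the factor $T=\sum_k s_k^p$ with the pairwise sum to extract the factor $m$, which is exactly the nontrivial Chebyshev-type step; and your alternative (Lagrange multipliers or Schur convexity on the slice $\{T=\mathrm{const}\}$) is not carried out. So the $p\in[1,2]$ half of the theorem remains unproven in your proposal.
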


\begin{proof}
    Let $A=\sum_{i=1}^ms_i^p$, $B=\sum_{i=1}^ms_i^{p-2}$, and $C=\sum_{i=1}^ms_i^{2p-2}$. 
    Substituting $\phi_i=\alpha s_i^{p-2}/(\sum_{j=1}^ms_j^p)$ into the unbiased estimate of risk in Proposition \ref{unbias} yields
\begin{align}
   &nm+\sum_{i=1}^m\Big\{\frac{\alpha^2 s_i^{2p-2}}{A^2}-\frac{2(n-m+1)\alpha s_i^{p-2}}{A}+\frac{2p\alpha s_i^{2p-2}}{A^2}-\frac{2(p-2)\alpha s_i^{p-2}}{A} \Big\}-4\alpha\sum_{i<j}\frac{ s_i^p- s_j^p}{( s_i^2- s_j^2)A} \notag\\
   &=nm+\frac{\alpha}{A^2}\Big\{(\alpha+2p) C-2(n-m+p-1)AB-4A\sum_{i<j}\frac{ s_i^p- s_j^p}{ s_i^2- s_j^2}\Big\}.\label{loss-1}
\end{align}
   Thus, if \eqref{loss-1} is not larger than $nm$ for every $X$, the estimator $\hat{M}_{p,\alpha}$ is minimax.

First, assume $p\in[1,2]$.
Since $AB\ge mC$ from Lemma~\ref{chebyshev} and
$$A\sum_{i<j}\frac{ s_i^p- s_j^p}{ s_i^2- s_j^2}\ge\frac{pm(m-1)}{4}C$$
from Lemma \ref{nns-inq}, the unbiased estimate of risk \eqref{loss-1} is bounded from above by
\begin{equation}
    nm+\frac{\alpha C}{A^2}\Big\{\alpha+2p-2(n-m+p-1)m-pm(m-1)\Big\}, \label{loss-2}
\end{equation}
which is not larger than $nm$ when $\alpha\le 2nm-(2-p)m(m+1)-2p$.

Next, assume $p\in(0,1)$.
From $A>0$ and $AB\ge mC$ from Lemma~\ref{chebyshev}, the unbiased estimate of risk \eqref{loss-1} is bounded from above by
\begin{equation}
    nm+\frac{\alpha C}{A^2}\Big\{\alpha+2p-2(n-m+p-1)m\Big\}, \label{loss-3}
\end{equation}
which is not greater than $nm$ when $\alpha\le 2nm-2m(m+1)+2(m-1)p.$
\end{proof}

When $p\in[1,2]$, the upper bound \eqref{loss-2} of the unbiased risk estimate \eqref{loss-1} is minimized at $\alpha=nm-(1-p/2)m(m+1)-p.$ 
Thus, this value of $\alpha$ can be considered as a default choice. 
For $p=1$, it is given by $\alpha=nm-m(m+1)/2-1$.

We refer to $\hat{M}_{p,\alpha}$ with $p=1$ and $\alpha=nm-m(m+1)/2-1$ as the nuclear norm shrinkage (NNS) estimator and denote it by $\hat{M}_{\mathrm{NNS}}$.
This estimator shrinks the singular values by a constant value that depends on the nuclear norm $\| X \|_1 = \sum_{j=1}^m s_j$ of $X$:
\begin{align}
\hat{\sigma}_i =  s_i-\frac{nm-m(m+1)/2-1}{\sum_{j=1}^m s_j}, \quad i=1,\dots,m. \label{NNSest}
\end{align}
From Corollary 3.1 of \cite{tsukuma2008admissibility}, $\hat{M}_{\mathrm{NNS}}$ is dominated by its positive-part, which we call $\hat{M}_{\mathrm{NNS+}}$. 
Namely, $\hat{M}_{\mathrm{NNS+}}$ is defined by
\[
\hat{\sigma}_i=\max\left( s_i-\frac{nm-m(m+1)/2-1}{\sum_{j=1}^m s_j}, 0 \right), \quad i=1,\dots,m.
\]

\begin{remark}
Note that $\hat{M}_{\mathrm{NNS+}}$ has a similar form to the Singular Value Thresholding (SVT) estimator by \cite{cai2010singular}:
$\hat\sigma_i=\max( s_i-\lambda, 0)$, which is the solution of the nuclear norm regularized optimization problem:
$\min_M \{-\log p(X\mid M)+\lambda\Vert M\Vert_1\}.$
This estimator is widely used in the estimation of low-rank matrices. 
In practice, the regularization parameter $\lambda$ is usually determined by minimizing Stein's unbiased risk estimate \citep{candes2013unbiased}. 
The estimator $\hat{M}_{\mathrm{NNS+}}$ can be viewed as determining $\lambda$ based on the nuclear norm of $X$: $\lambda=(nm-m(m+1)/2-1)/\Vert X\Vert_1$.
While $\hat{M}_{\mathrm{NNS+}}$ may not work better than SVT in estimation of low-rank matrices, $\hat{M}_{\mathrm{NNS+}}$ attains minimaxity.
\end{remark}

We also develop another class of minimax estimators.
Let $\hat{M}^*_{p,\alpha}$ be an equivariant estimator defined by
\[
\hat\sigma_i= s_i-\alpha s_i^{p-1}, \quad i=1,\dots,m.
\]
When $n-m+p-1>0$, the unbiased estimate of risk in Proposition~\ref{unbias} for $\hat{M}^*_{p,\alpha}$ is
\begin{equation*}
    nm+\alpha\left\{\alpha\sum_{i=1}^m s_i^{2p-2}-2(n-m+p-1)\sum_{i=1}^m s_i^{p-2}-4\sum_{i<j}\frac{ s_i^p- s_j^p}{ s_i^2- s_j^2}\right\},
\end{equation*}
which is minimized with respect to $\alpha$ at 
\begin{equation}
\alpha( s,p)= \left( \sum_{i=1}^m s_i^{2p-2} \right)^{-1} \left( (n-m+p-1)\sum_{i=1}^m s_i^{p-2}+2\sum_{i<j}\frac{ s_i^p- s_j^p}{ s_i^2- s_j^2} \right). \label{alpha_opt}
\end{equation}
Thus, we consider the estimator $\hat{M}^*_p$ defined by
\[
\hat\sigma_i= s_i-\alpha( s,p)  s_i^{p-1}, \quad i=1,\dots,m.
\]
For $p=2$, it is approximately equal to the James--Stein estimator from $\alpha( s,2)=nm/(\sum_{i=1}^m s_i^2)$. 
For $p=0$, it coincides with the Efron--Morris estimator from $\alpha( s,0)=n-m-1$. 
This estimator attains minimaxity under certain conditions as follows.

\begin{theorem} \label{sure-minimax}
   For $p\in(0,1]$, $\hat{M}^*_p$ is minimax if $n-m\ge5-3p$.
\end{theorem}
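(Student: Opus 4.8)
The plan is to compute the unbiased risk estimate of $\hat M^*_p$ directly from Proposition~\ref{unbias}, being careful that the multiplier $\alpha(s,p)$ of \eqref{alpha_opt} now depends on the whole vector $s$. Writing $\phi_i=\alpha(s,p)\,s_i^{p-2}$ and setting $B=\sum_i s_i^{p-2}$, $C=\sum_i s_i^{2p-2}$, $D=\sum_{i<j}(s_i^p-s_j^p)/(s_i^2-s_j^2)$, the only new feature compared with the constant-$\alpha$ computation is the term $-2s_i\,\partial\phi_i/\partial s_i$, which now produces an extra contribution $-2\sum_i (\partial\alpha/\partial s_i)\,s_i^{p-1}$. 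Since $\alpha(s,p)$ is by construction the minimizer of the quadratic-in-$\alpha$ risk estimate preceding \eqref{alpha_opt}, it satisfies the stationarity identity $\alpha C=(n-m+p-1)B+2D$, and substituting this collapses the remaining terms, giving the unbiased risk estimate
\[
nm-\alpha^2 C-2\sum_{i=1}^m \frac{\partial\alpha}{\partial s_i}\,s_i^{p-1}.
\]
Hence minimaxity follows once I show $\alpha^2 C+2\sum_i (\partial\alpha/\partial s_i)\,s_i^{p-1}\ge 0$ for every $s$.

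Next I would evaluate the derivative by the quotient rule on $\alpha=N/C$ with $N=(n-m+p-1)B+2D$. The derivatives of the power sums $B$ and $C$ are elementary and introduce the sums $F=\sum_i s_i^{2p-4}$ and $E=\sum_i s_i^{3p-4}$; the delicate piece is $\partial D/\partial s_i$. Differentiating the off-diagonal sum and symmetrizing over each pair $\{i,j\}$ yields the pairwise sum with summand $\frac{p(s_i^2-s_j^2)(s_i^{2p-2}-s_j^{2p-2})-2(s_i^p-s_j^p)^2}{(s_i^2-s_j^2)^2}$. Collecting terms and clearing denominators, the target inequality becomes a relation among $N,C,E,F$ and this pairwise sum, namely $N^2C+2(n-m+p-1)(p-2)FC+4C\sum_i(\partial D/\partial s_i)s_i^{p-1}-4(p-1)EN\ge 0$.

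Finally I would carry out the sign analysis for $p\in(0,1]$. Here $N>0$, since each summand of $D$ is nonnegative and $B>0$ with $n-m+p-1>0$ under the hypothesis, so the favorable term $\alpha^2C=N^2/C$ grows quadratically in $n-m$, whereas the two unfavorable contributions—coming from $(p-2)F$ and from the pairwise $\partial D$-sum, both nonpositive when $p\le 1$ because $x\mapsto x^{2p-2}$ is then nonincreasing—grow at most linearly in $n-m$. Bounding these unfavorable terms against $N^2/C$ using the Appendix inequalities (Lemmas~\ref{chebyshev} and~\ref{nns-inq}) together with elementary power-mean estimates should show the whole expression is nonnegative precisely when $n-m\ge 5-3p$. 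The hard part will be the pairwise term generated by $\partial D/\partial s_i$: controlling it uniformly in $s$, including the degenerate limits where several $s_i$ coincide or tend to $0$ (where the extremal configuration sits), is the crux of the argument and is exactly what pins down the constant $5-3p$.
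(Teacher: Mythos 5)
Your reduction is correct and coincides with the paper's: substituting the stationarity identity $\alpha C=(n-m+p-1)B+2D=N$ collapses the unbiased risk estimate to $nm-N^2/C-2\sum_i s_i^{p-1}\,\partial\alpha/\partial s_i$, and your symmetrized pairwise summand $\bigl(p(s_i^2-s_j^2)(s_i^{2p-2}-s_j^{2p-2})-2(s_i^p-s_j^p)^2\bigr)/(s_i^2-s_j^2)^2$ is algebraically identical to the one appearing in the paper's proof. The problem is that your proposal stops exactly where the proof has to happen: you say the required bound ``should'' follow from Lemmas~\ref{chebyshev} and~\ref{nns-inq} plus power-mean estimates and you concede that controlling the pairwise $\partial D$-term is the crux, but that crux is never carried out, and the tools you point to cannot deliver it. Lemma~\ref{nns-inq} is proved only for $p\in[1,2]$ and concerns the product $\bigl(\sum_i\sigma_i^p\bigr)\sum_{i<j}(\sigma_i^p-\sigma_j^p)/(\sigma_i^2-\sigma_j^2)$, which is a different quantity from $\sum_i s_i^{p-1}\,\partial D/\partial s_i$, and Lemma~\ref{chebyshev} alone says nothing about it. Moreover, your quadratic-versus-linear growth heuristic in $n-m$ can at best establish minimaxity for all sufficiently large $n-m$; it cannot produce the threshold $5-3p$, which comes from exact constants, not asymptotics.

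What closes the gap in the paper is a self-contained two-variable inequality that your argument is missing: for $p\in(0,1]$,
\[
\frac{(p-2)(s_i^{2p}+s_j^{2p})-p\,s_i^{2p-2}s_j^2-p\,s_j^{2p-2}s_i^2+4\,s_i^p s_j^p}{(s_i^2-s_j^2)^2}\;\ge\;-2\,s_i^{p-2}s_j^{p-2},
\]
which follows from $s_i^{2p}+s_j^{2p}\le (s_i^4+s_j^4)\,s_i^{p-2}s_j^{p-2}$ and $s_i^{2p-2}s_j^2+s_j^{2p-2}s_i^2\le (s_i^4+s_j^4)\,s_i^{p-2}s_j^{p-2}$ (both factor as products of two terms of equal sign). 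The argument then runs: drop the favorable term $-4(p-1)NE\ge 0$ (valid since $p\le 1$ and $N,E\ge 0$, as you implicitly note); apply the displayed per-pair bound; use $(n-m+p-1)(p-2)\le -2$, a consequence of the hypothesis, to absorb $-4\sum_{i<j}s_i^{p-2}s_j^{p-2}$ into the $F$-term via $B^2=F+2\sum_{i<j}s_i^{p-2}s_j^{p-2}$, giving $\sum_i s_i^{p-1}\,\partial\alpha/\partial s_i\ge (n-m+p-1)(p-2)B^2/C$; and bound $N^2\ge (n-m+p-1)^2B^2$ using $D\ge 0$. The risk estimate is then at most $nm-\bigl[(n-m+p-1)^2+2(n-m+p-1)(p-2)\bigr]B^2/C$, and the bracket is nonnegative exactly when $n-m+p-1\ge 2(2-p)$, i.e.\ $n-m\ge 5-3p$. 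Without this per-pair inequality (or an equivalent), your outline does not yield the theorem; note also that this chain only shows $5-3p$ is sufficient, not that it is ``precisely'' where the sign changes, so your claim of pinning down the constant via an extremal configuration overstates what is established.
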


\begin{proof}
In general, the unbiased estimate of risk in Proposition~\ref{unbias} for the equivariant estimator
\[
\hat\sigma_i= s_i-\alpha( s) s_i^{p-1}, \quad i=1,\dots,m.
\]
is
\begin{equation*}
    nm+\sum_{i=1}^m\Big\{ s_i^{2p-2}\alpha( s)^2-2(n-m+p-1) s_i^{p-2}\alpha( s)-2 s_i^{p-1}\frac{\partial\alpha( s)}{\partial s_i} \Big\}-4\sum_{i<j}\alpha( s)\frac{ s_i^p- s_j^p}{ s_i^2- s_j^2}. \end{equation*}
   By substituting \eqref{alpha_opt}, the unbiased estimate is equal to
   \begin{align}
   &nm+\alpha( s,p)\Big\{\alpha( s,p)\sum_{i=1}^m s_i^{2p-2}-2(n-m+p-1)\sum_{i=1}^m s_i^{p-2}-4\sum_{i<j}\frac{ s_i^p- s_j^p}{ s_i^2- s_j^2}\Big\}-2\sum_{i=1}^m s_i^{p-1}\frac{\partial\alpha( s,p)}{\partial s_i} \notag\\
    &=nm-\Big\{(n-m+p-1)\sum_{i=1}^m s_i^{p-2}+2\sum_{i<j}\frac{ s_i^p- s_j^p}{ s_i^2- s_j^2}\Big\}^2/(\sum_{i=1}^m s_i^{2p-2})-2\sum_{i=1}^m s_i^{p-1}\frac{\partial\alpha( s,p)}{\partial s_i}. \label{sure-min}
\end{align} 
From $p\le1,$
\begin{align}
    \sum_{i=1}^m s_i^{p-1}\frac{\partial\alpha( s,p)}{\partial s_i}&\ge\sum_{i=1}^m s_i^{p-1}\frac{(n-m+p-1)(p-2) s_i^{p-3}+2\sum_{j\neq i}\frac{(p-2) s_i^{p+1}-p s_i^{p-1} s_j^2+2 s_i s_j^p}{( s_i^2- s_j^2)^2}}{\sum_{i=1}^m s_i^{2p-2}}\notag\\
    &=\frac{(n-m+p-1)(p-2)\sum_{i=1}^m s_i^{2p-4}+2\sum_{i<j}\frac{(p-2)( s_i^{2p}+ s_j^{2p})-p s_i^{2p-2} s_j^2-p s_j^{2p-2} s_i^2+4 s_i^p s_j^p}{ s_i^4+ s_j^4-2 s_i^2 s_j^2}}{\sum_{i=1}^m s_i^{2p-2}} \label{sure-inq1}
    \end{align}
Also, from $ s_i^{2p}+ s_j^{2p}\le( s_i^4+ s_j^4) s_i^{p-2} s_j^{p-2}$ and $ s_i^{2p-2} s_j^2+ s_i^{2p-2} s_i^2\le( s_i^4+ s_j^4) s_i^{p-2} s_j^{p-2}$, 
\begin{align}
    \frac{(p-2)( s_i^{2p}+ s_j^{2p})-p s_i^{2p-2} s_j^2-p s_j^{2p-2} s_i^2+4 s_i^p s_j^p}{ s_i^4+ s_j^4-2 s_i^2 s_j^2}\ge-2 s_i^{p-2} s_j^{p-2}.\label{sure-inq3}
\end{align}
Therefore, using \eqref{sure-inq1}, \eqref{sure-inq3}, and $(n-m+p-1)(p-2)\le-2$,
    \begin{align}
    \sum_{i=1}^m s_i^{p-1}\frac{\partial\alpha( s,p)}{\partial s_i}&\ge\frac{(n-m+p-1)(p-2)\sum_{i=1}^m s_i^{2p-4}+2\sum_{i<j}(-2) s_i^{p-2} s_j^{p-2}}{\sum_{i=1}^m s_i^{2p-2}}\notag\\
    &\ge (n-m+p-1)(p-2)(\sum_{i=1}^m s_i^{p-2})^2/(\sum_{i=1}^m s_i^{2p-2}). \label{sure-inq2}
    \end{align}
From \eqref{sure-inq2}, the unbiased loss estimator \eqref{sure-min} is bounded from above by
\begin{align*}
   &nm-\Big\{(n-m+p-1)\sum_{i=1}^m s_i^{p-2}\Big\}^2/(\sum_{i=1}^m s_i^{2p-2})-2\sum_{i=1}^m s_i^{p-1}\frac{\partial\alpha( s,p)}{\partial s_i} \\
   \le\ & nm-\Big\{(n-m+p-1)\sum_{i=1}^m s_i^{p-2}\Big\}^2/(\sum_{i=1}^m s_i^{2p-2})-2(n-m+p-1)(p-2)(\sum_{i=1}^m s_i^{p-2})^2/(\sum_{i=1}^m s_i^{2p-2}) \\
   =\ & nm-\big[(n-m+p-1)^2+2(n-m+p-1)(p-2)\big](\sum_{i=1}^m s_i^{p-2})^2/(\sum_{i=1}^m s_i^{2p-2}).
\end{align*} 
It is smaller than $nm$ if $(n-m+p-1)^2+2(n-m+p-1)(p-2)\ge0$, which holds when $n-m\ge 5-3p$.
\end{proof}

Let $\hat{M}^{*,+}_p$ be the positive-part estimator derived from $\hat{M}^{*}_p$ given by
\[
\hat\sigma_i=\max\left( s_i-\alpha( s,p) s_i^{p-1}, 0\right), \quad i=1,\dots,m.
\]
From \cite{tsukuma2008admissibility}, $\hat{M}^{*,+}_p$ dominates $M^{*}_p$.
Thus, from Theorem~\ref{sure-minimax}, $\hat{M}^{*,+}_p$ is also minimax for $p\in(0,1]$ and $n-m\ge5-3p.$

\section{Matrix norm shrinkage prior}\label{sec:prior}
Let $\sigma_1 \geq \cdots \geq \sigma_m \geq 0$ be the singular values of $M$.
We consider a class of priors that shrink the matrix norm \eqref{schatten} of $M$ towards zero:
\[
\pi_{p,\alpha}(M)=\Vert M\Vert_{p}^{-\alpha}=\left( \sum_{i=1}^m \sigma_i^p \right)^{-\alpha/p}.
\]
For $p=2$ and $\alpha=nm-2$, it coincides with Stein's prior \eqref{stein}. 
Also, from
$$\lim_{p\to 0}\Vert M\Vert_{p}m^{-1/p}=\lim_{p\to 0}\Big(\sum_{i=1}^m(1+p\log\sigma_i) \Big)^{1/p}m^{-1/p}=\prod_{i=1}^m\sigma_i^{1/m}=\det(M^\top M)^{1/(2m)},$$
the singular value shrinkage prior \eqref{SVS} can be viewed as $\pi_{p,\alpha}$ for $p=0$ and $\alpha=m(n-m-1)$.

We derive a sufficient condition for $\pi_{p,\alpha}$ to be superharmonic. 
Since $\pi_{p,\alpha}(M)$ only depends on the singular values of $M$, we use the following Laplacian formula in the singular value coordinate system. 

\begin{proposition}\citep{stein1974estimation, matsuda2019improved}\label{matrix-lapla}
If a $C^2$ matrix function $f(M)$ only depends on the singular values of $M$, then its Laplacian is given by
\[
\Delta f=\sum_{i=1}^n\sum_{j=1}^m\frac{\partial^2 f}{\partial M_{ij}^2}
=2\sum_{i<j}\frac{\sigma_i\frac{\partial f}{\partial \sigma_i}-\sigma_j\frac{\partial f}{\partial \sigma_j}}{\sigma_i^2-\sigma_j^2}+(n-m)\sum_{i=1}^m\frac{1}{\sigma_i}\frac{\partial f}{\partial \sigma_i}+\sum_{i=1}^m\frac{\partial^2 f}{\partial \sigma_i^2}.
\]
\end{proposition}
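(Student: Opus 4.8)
The plan is to exploit the orthogonal invariance of both $f$ and the Laplacian in order to reduce the computation to a single canonical point, and then to apply the chain rule, for which I need the first and second derivatives of each singular value $\sigma_k$ with respect to the matrix entries $M_{ij}$. Concretely, the maps $M \mapsto O_1 M O_2^{\top}$ with $O_1 \in \mathrm{O}(n)$ and $O_2 \in \mathrm{O}(m)$ are linear isometries of $\mathbb{R}^{n\times m}$ equipped with the Frobenius inner product, so they leave $\Delta$ invariant; since the singular values, and hence $f$, are also invariant under these maps, $\Delta f$ is an orthogonally invariant function, and it suffices to evaluate it at a point $M_0$ whose only nonzero entries are $(M_0)_{kk} = \sigma_k$ for $k = 1, \dots, m$, a diagonal representative with distinct positive singular values. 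At $M_0$ the left and right singular vectors are the standard basis vectors, which makes the perturbation analysis explicit.

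First I would compute, at $M_0$, the derivatives $\partial \sigma_k/\partial M_{ij}$ and $\partial^2 \sigma_k/\partial M_{ij}^2$ by perturbing $M_0$ in each coordinate direction $E_{ij}$ and tracking the eigenvalues $\lambda_k = \sigma_k^2$ of $M^{\top}M$ to second order via standard symmetric eigenvalue perturbation theory; the relation $\sigma_k = \sqrt{\lambda_k}$ then yields the derivatives of $\sigma_k$. There are three types of directions. For a diagonal direction $i = j = k \le m$, perturbing $M_{kk}$ keeps $M$ diagonal, so $\sigma_k \mapsto \sigma_k + \epsilon$ exactly, giving $\partial\sigma_k/\partial M_{kk} = 1$ and $\partial^2\sigma_k/\partial M_{kk}^2 = 0$, with all other singular values unchanged. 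For an off-diagonal direction $i \neq j$ with $i, j \le m$, the first-order term vanishes and the second-order perturbation produces the repulsion terms $\partial^2\sigma_i/\partial M_{ij}^2 = \sigma_i/(\sigma_i^2 - \sigma_j^2)$ and $\partial^2\sigma_j/\partial M_{ij}^2 = -\sigma_j/(\sigma_i^2 - \sigma_j^2)$. For an extra-row direction $i > m$ with $j \le m$, one finds $\lambda_j \mapsto \sigma_j^2 + \epsilon^2$, hence $\partial^2\sigma_j/\partial M_{ij}^2 = 1/\sigma_j$, with the other singular values and all first derivatives vanishing.

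Then I would assemble $\Delta f = \sum_{i,j}\partial^2 f/\partial M_{ij}^2$ through the chain rule,
\[
\frac{\partial^2 f}{\partial M_{ij}^2} = \sum_k \frac{\partial f}{\partial\sigma_k}\frac{\partial^2\sigma_k}{\partial M_{ij}^2} + \sum_{k,l}\frac{\partial^2 f}{\partial\sigma_k\,\partial\sigma_l}\frac{\partial\sigma_k}{\partial M_{ij}}\frac{\partial\sigma_l}{\partial M_{ij}}.
\]
Since a first derivative $\partial\sigma_k/\partial M_{ij}$ is nonzero only along the diagonal directions, the Hessian term survives only for $(i,j) = (k,k)$ and contributes exactly $\sum_{i=1}^m \partial^2 f/\partial\sigma_i^2$. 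The extra-row directions contribute $(n-m)\sum_{j=1}^m (1/\sigma_j)\,\partial f/\partial\sigma_j$ after summing over the $n-m$ values of $i > m$. The off-diagonal directions contribute $\sum_{i\neq j}(\sigma_i\,\partial f/\partial\sigma_i - \sigma_j\,\partial f/\partial\sigma_j)/(\sigma_i^2 - \sigma_j^2)$; the key observation is that the ordered pairs $(i,j)$ and $(j,i)$ give identical summands, so this symmetrizes to $2\sum_{i<j}(\sigma_i\,\partial f/\partial\sigma_i - \sigma_j\,\partial f/\partial\sigma_j)/(\sigma_i^2-\sigma_j^2)$. Adding the three pieces gives the claimed formula.

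The main obstacle is the second-order perturbation of the singular values, specifically obtaining the off-diagonal repulsion terms correctly: unlike the first-order theory, the second-order coefficient of $\lambda_k$ in direction $E_{ij}$ couples the eigenvalues $k = i$ and $k = j$ through the denominator $\sigma_i^2 - \sigma_j^2$, and one must track both the genuine second-order eigenvalue shift and the contribution of the $\epsilon^2 E_{ij}^{\top} E_{ij}$ term before taking the square root $\sigma_k = \sqrt{\lambda_k}$. A secondary point worth recording is that the denominators $\sigma_i^2 - \sigma_j^2$ and the factors $1/\sigma_i$ force the formula to be valid at matrices with distinct, positive singular values, which is the generic case and is all that is needed for the superharmonicity computations that follow.
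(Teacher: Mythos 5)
The paper does not prove this proposition at all---it is quoted with citations to Stein (1974) and Matsuda--Komaki (2019), so there is no in-paper argument to compare against. Your proof is correct and self-contained. The invariance reduction is sound: $M \mapsto O_1 M O_2^{\top}$ is an isometry of $(\mathbb{R}^{n\times m}, \|\cdot\|_{\mathrm{F}})$, hence commutes with $\Delta$, and both $f$ and the claimed right-hand side are orthogonally invariant, so checking the identity at a diagonal representative with distinct positive singular values suffices. Your perturbation computations also check out: writing $M_0+\epsilon E_{ij}$ and expanding $M^{\top}M = D^2 + \epsilon\,\sigma_i\bigl(E^{(m)}_{ij}+E^{(m)}_{ji}\bigr) + \epsilon^2 E^{(m)}_{jj}$ for $i\neq j\le m$, second-order Rayleigh--Schr\"odinger theory gives $\lambda_i(\epsilon)=\sigma_i^2+\epsilon^2\sigma_i^2/(\sigma_i^2-\sigma_j^2)$ and $\lambda_j(\epsilon)=\sigma_j^2-\epsilon^2\sigma_j^2/(\sigma_i^2-\sigma_j^2)$ (the $+1$ from $E^{(m)}_{jj}$ combining with the coupling term $-\sigma_i^2/(\sigma_i^2-\sigma_j^2)$), which after taking square roots yields exactly your repulsion terms $\pm\sigma_{\cdot}/(\sigma_i^2-\sigma_j^2)$; the extra-row directions give $\lambda_j=\sigma_j^2+\epsilon^2$ and hence $1/\sigma_j$; and since all first-order perturbations vanish off the diagonal directions, the Hessian term in the chain rule collapses to $\sum_i \partial^2 f/\partial\sigma_i^2$. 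A quick sanity check with $f=\|M\|_{\mathrm{F}}^2$ recovers $\Delta f = 2nm$ from your formula. For the record, the classical derivation (Stein's) goes a different way: one writes the Laplacian of an orthogonally invariant function as the radial part $\Delta f = J^{-1}\sum_i \partial_{\sigma_i}(J\,\partial_{\sigma_i} f)$ with respect to the SVD Jacobian $J=\prod_{i<j}(\sigma_i^2-\sigma_j^2)\prod_i\sigma_i^{\,n-m}$, and expands; your route avoids any machinery about the Jacobian of the singular value decomposition at the cost of the explicit second-order eigenvalue perturbation, which you handled correctly, including the restriction to distinct positive singular values.
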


\begin{theorem} \label{norm-harmonic}
For $p\in[1,2]$, $\pi_{p,\alpha}(M)$ is superharmonic if $\alpha\in[0,nm-(1-p/2)m(m+1)-p]$. 
For $p\in(0,1),$ $\pi_{p,\alpha}(M)$ is superharmonic if $\alpha\in[0,nm-m(m+1)+(m-1)p]$.
\end{theorem}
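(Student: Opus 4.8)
The plan is to apply the singular-value Laplacian formula of Proposition~\ref{matrix-lapla} to $\pi_{p,\alpha}(M)=A^{-\alpha/p}$ with $A=\sum_{i=1}^m\sigma_i^p$, and to reduce the superharmonicity condition to essentially the same inequality that governed minimaxity in Theorem~\ref{norm-minimax}. First I would compute the two derivatives that feed into the formula. Using $\partial A/\partial\sigma_i=p\sigma_i^{p-1}$ one gets $\partial\pi_{p,\alpha}/\partial\sigma_i=-\alpha\sigma_i^{p-1}A^{-\alpha/p-1}$ and, after differentiating once more, $\partial^2\pi_{p,\alpha}/\partial\sigma_i^2=-\alpha(p-1)\sigma_i^{p-2}A^{-\alpha/p-1}+\alpha(\alpha+p)\sigma_i^{2p-2}A^{-\alpha/p-2}$.

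Substituting these into Proposition~\ref{matrix-lapla} and collecting terms with the abbreviations $B=\sum_i\sigma_i^{p-2}$ and $C=\sum_i\sigma_i^{2p-2}$ (in $\sigma_i$ now, mirroring the $A,B,C$ of the proof of Theorem~\ref{norm-minimax}), the off-diagonal sum contributes $-2\alpha A^{-\alpha/p-1}\sum_{i<j}(\sigma_i^p-\sigma_j^p)/(\sigma_i^2-\sigma_j^2)$, since $\sigma_i\,\partial_i\pi-\sigma_j\,\partial_j\pi=-\alpha A^{-\alpha/p-1}(\sigma_i^p-\sigma_j^p)$; the $(n-m)$ term contributes $-(n-m)\alpha A^{-\alpha/p-1}B$; and the pure second-derivative sum contributes $-\alpha(p-1)A^{-\alpha/p-1}B+\alpha(\alpha+p)A^{-\alpha/p-2}C$. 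Factoring out the positive quantity $\alpha A^{-\alpha/p-2}$ (here I use $\alpha\ge0$ and $A>0$) leaves
\begin{equation*}
\Delta\pi_{p,\alpha}=\alpha A^{-\alpha/p-2}\Big\{(\alpha+p)C-(n-m+p-1)AB-2A\sum_{i<j}\frac{\sigma_i^p-\sigma_j^p}{\sigma_i^2-\sigma_j^2}\Big\},
\end{equation*}
so superharmonicity is equivalent to the bracket being nonpositive. This bracket is exactly half of the one in \eqref{loss-1} with $\alpha$ replaced by $2\alpha$, which already explains why the admissible $\alpha$-range here is half of the one in Theorem~\ref{norm-minimax}.

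The last step reuses the two matrix inequalities verbatim. By Lemma~\ref{chebyshev}, $AB\ge mC$; since $n-m+p-1\ge0$ in the regime where the stated $\alpha$-range is nonempty, this bounds $-(n-m+p-1)AB$ above by $-(n-m+p-1)mC$. For $p\in[1,2]$ I would also invoke Lemma~\ref{nns-inq}, namely $A\sum_{i<j}(\sigma_i^p-\sigma_j^p)/(\sigma_i^2-\sigma_j^2)\ge pm(m-1)C/4$, to control the cross term; the bracket is then at most $C\{\alpha+p-(n-m+p-1)m-pm(m-1)/2\}$, which is nonpositive precisely when $\alpha\le nm-(1-p/2)m(m+1)-p$. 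For $p\in(0,1)$ the cross term is nonnegative because each summand has matching signs in numerator and denominator, so it may simply be dropped, leaving $C\{\alpha+p-(n-m+p-1)m\}\le0$, i.e. $\alpha\le nm-m(m+1)+(m-1)p$. I expect the only delicate point to be the bookkeeping in the second derivative and in recombining the three Laplacian contributions into the compact $A,B,C$ form; once that is done the argument is a direct translation of the minimax computation.
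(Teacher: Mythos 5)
Your Laplacian computation is correct and reproduces exactly the paper's key identity: your bracket $(\alpha+p)C-(n-m+p-1)AB-2A\sum_{i<j}(\sigma_i^p-\sigma_j^p)/(\sigma_i^2-\sigma_j^2)$ agrees with the paper's expression \eqref{laplacian} (with $\beta=-\alpha/p$), and your use of Lemma~\ref{chebyshev} and Lemma~\ref{nns-inq}, the sign observations (including dropping the nonnegative cross term for $p\in(0,1)$ and checking $n-m+p-1\ge 0$), and the resulting thresholds for $\alpha$ all match the paper's argument. The observation relating the bracket to \eqref{loss-1} with $\alpha\mapsto 2\alpha$ is a nice touch not in the paper.

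However, there is a genuine gap in the step where you declare that ``superharmonicity is equivalent to the bracket being nonpositive.'' This equivalence presupposes that $\pi_{p,\alpha}$ is $C^2$ on all of $\mathbb{R}^{n\times m}$, which it is not: $\pi_{p,\alpha}(M)=\Vert M\Vert_p^{-\alpha}$ equals $+\infty$ at $M=0$, and for $p<2$ the function $\sum_i\sigma_i^p=\mathrm{tr}\bigl((M^\top M)^{p/2}\bigr)$ fails to be twice differentiable on the set of rank-deficient matrices (indeed the terms $\sigma_i^{p-2}$, $\sigma_i^{2p-2}$ in your formula blow up there). Superharmonicity is a global, potential-theoretic property (lower semicontinuity plus the mean-value inequality, equivalently $\Delta\pi\le 0$ as a distribution), and a pointwise Laplacian computation on the smooth locus does not imply it: a measure-zero singular set can carry positive distributional Laplacian mass, as the one-dimensional example $f(x)=|x|$ shows ($f''=0$ away from the origin, yet $f''=2\delta_0>0$ distributionally, so $f$ is subharmonic, not harmonic). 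This is precisely why the paper does not differentiate $\pi_{p,\alpha}$ directly: it introduces the regularized functions $f_k(M)=\bigl(\sum_i\sigma_i^p+1/k\bigr)^{-\alpha/p}$, runs your same computation and lemmas to show each $f_k$ has nonpositive Laplacian (hence is superharmonic by Lemma 3.3.4 of Helms), notes $f_k\uparrow\pi_{p,\alpha}$, and invokes the monotone-limit theorem for superharmonic functions (Theorem 3.4.8 of Helms) to transfer superharmonicity to the limit. To complete your proof you would need to add this approximation-and-limit step (or some equivalent argument, e.g.\ verifying the mean-value inequality across the singular set), since without it the conclusion does not follow from the smooth-locus calculation alone.
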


\begin{proof}
Let $\beta=-\alpha/p$. We consider the $C^2$ matrix function $$f_{k}(M)=\Big(\sum_{i=1}^m\sigma_i^p+1/k \Big)^{\beta}.$$
Thus, $f_{1}\le f_{2}\le\cdots$ and $\lim_{k\to\infty}f_k(M)=\Vert M\Vert_{p}^{-\alpha}=\pi_{p,\alpha}(M)$ for every $M$. Therefore, by Theorem 3.4.8 of \cite{helms2009potential}, we need only prove that $f_k$ is superharmonic. Define $f=\sum_{i=1}^m\sigma_i^p+1/k$. Then $f_k=f^\beta$.
Using Proposition \ref{matrix-lapla}, the Laplacian is
\begin{align}
  \Delta f_k  &=2\sum_{i<j}\frac{\sigma_i\frac{\partial f_k}{\partial \sigma_i}-\sigma_j\frac{\partial f_k}{\partial \sigma_j}}{\sigma_i^2-\sigma_j^2}+(n-m)\sum_{i=1}^m\frac{1}{\sigma_i}\frac{\partial f_k}{\partial \sigma_i}+\sum_{i=1}^m\frac{\partial^2 f_k}{\partial \sigma_i^2} \notag\\
  &=2\sum_{i<j}\frac{\beta p\sigma_i^p f^{\beta-1}-\beta p\sigma_j^p f^{\beta-1}}{\sigma_i^2-\sigma_j^2}+(n-m)\sum_{i=1}^m\beta p\sigma_i^{p-2} f^{\beta-1}+\sum_{i=1}^m\beta p\sigma_i^{p-2} f^{\beta-2}((\beta-1)p\sigma_i^{p}+(p-1)f) \notag\\
  &=\beta pf^{\beta-2}\Big\{2f\sum_{i<j}\frac{\sigma_i^p-\sigma_j^p}{\sigma_i^2-\sigma_j^2}+(n-m)f\sum_{i=1}^m\sigma_i^{p-2}+(\beta-1)p\sum_{i=1}^m\sigma_i^{2p-2}+(p-1)f\sum_{i=1}^m\sigma_i^{p-2} \Big\}\notag\\
  &=\beta pf^{\beta-2}\Big\{2f\sum_{i<j}\frac{\sigma_i^p-\sigma_j^p}{\sigma_i^2-\sigma_j^2}+(n-m+p-1)f\sum_{i=1}^m\sigma_i^{p-2}+(\beta-1)p\sum_{i=1}^m\sigma_i^{2p-2} \Big\} \label{laplacian}
\end{align}

We first discuss the case $p\in[1,2]$. Using Lemma \ref{nns-inq}, we have 
\begin{equation}
    f\sum_{i<j}\frac{\sigma_i^p-\sigma_j^p}{\sigma_i^2-\sigma_j^2}>\frac{pm(m-1)}{4}\sum_{i=1}^m\sigma_i^{2p-2}. \label{ineq-1}
\end{equation}
Using Lemma~\ref{chebyshev}, we have
\begin{equation}
    f\sum_{i=1}^m\sigma_i^{p-2}>\Big(\sum_{i=1}^m\sigma_i^{p}\Big)\Big(\sum_{i=1}^m\sigma_i^{p-2}\Big)\ge m\sum_{i=1}^m\sigma_i^{2p-2}. \label{ineq-2}
\end{equation}
Because $\beta\le0$,
using \eqref{ineq-1} and \eqref{ineq-2}, the Laplacian \eqref{laplacian} is not greater than
\begin{align}
    \beta pf^{\beta-2}\Big\{\frac{pm(m-1)}{2}\sum_{i=1}^m\sigma_i^{2p-2}+(n-m+p-1)m\sum_{i=1}^m\sigma_i^{2p-2}+(\beta-1)p\sum_{i=1}^m\sigma_i^{2p-2} \Big\}. \label{lapla-2}
\end{align}
Because $\beta p=-\alpha\ge (1-p/2)m(m+1)+p-nm$, $$\frac{pm(m-1)}{2}+(n-m+p-1)m+(\beta-1)p\ge0.$$ Thus, \eqref{lapla-2} is nonpositive. Thus, the Laplacian \eqref{laplacian} is nonpositive.

Next, we discuss the case $p\in(0,1)$. Because $\alpha\in[0,nm-m(m+1)+(m-1)p]$, $n-m+p-1\ge0$. Using \eqref{ineq-2}, the Laplacian \eqref{laplacian} is not greater than
\begin{align}
    \beta pf^{\beta-2}\Big\{(n-m+p-1)m\sum_{i=1}^m\sigma_i^{2p-2}+(\beta-1)p\sum_{i=1}^m\sigma_i^{2p-2} \Big\}. \label{lapla-3}
\end{align}
Because $\beta p=-\alpha\ge -nm+m(m+1)-(m-1)p$, \eqref{lapla-3} is nonpositive. Thus, the Laplacian \eqref{laplacian} is nonpositive.

Because the Laplacian \eqref{laplacian} is nonpositive, from Lemma 3.3.4 of \cite{helms2009potential}, $f_k$ is superharmonic, which completes the proof.
\end{proof}

\begin{corollary}
For $p\in[1,2]$, the Bayes estimator with respect to $\pi_{p,\alpha}(M)$ is minimax if $\alpha\in[0,nm-(1-p/2)m(m+1)-p]$. 
For $p\in(0,1),$ the Bayes estimator with respect to $\pi_{p,\alpha}(M)$ is minimax if $\alpha\in[0,nm-m(m+1)+(m-1)p]$.
\end{corollary}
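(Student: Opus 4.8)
The plan is to reduce the statement to two ingredients that are already in place. The first is Theorem~\ref{norm-harmonic}, which establishes that $\pi_{p,\alpha}(M)$ is superharmonic for $p\in[1,2]$ when $\alpha\in[0,nm-(1-p/2)m(m+1)-p]$ and for $p\in(0,1)$ when $\alpha\in[0,nm-m(m+1)+(m-1)p]$. The second is the classical fact, recalled in the Introduction from \cite{stein1974estimation}, that in the Gaussian mean model under the Frobenius loss the generalized Bayes estimator with respect to a superharmonic prior is minimax. Since the admissible ranges of $\alpha$ in this Corollary coincide exactly with those in Theorem~\ref{norm-harmonic}, I would simply observe that $\pi_{p,\alpha}$ is superharmonic under the stated conditions and then invoke Stein's minimaxity result to conclude.

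To make the argument self-contained I would spell out the chain behind Stein's result. Writing $m(X)=\int \phi(X-M)\,\pi_{p,\alpha}(M)\,\du M$ for the marginal density, where $\phi$ is the standard Gaussian density on $\mathbb{R}^{n\times m}$, the generalized Bayes estimator is $\hat M = X + \nabla\log m(X)$, and Stein's unbiased risk identity expresses its Frobenius risk as $nm + 4\,{\rm E}_M[\Delta\sqrt{m}(X)/\sqrt{m}(X)]$. Convolution with the nonnegative, radially symmetric Gaussian kernel preserves superharmonicity (spherical averaging commutes with convolution and the sub-mean-value inequality is preserved under averaging against a nonnegative kernel), so superharmonicity of $\pi_{p,\alpha}$ passes to $m$, i.e.\ $\Delta m\le 0$. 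From the identity $\Delta\sqrt m = \Delta m/(2\sqrt m) - \|\nabla m\|_{\mathrm{F}}^2/(4 m^{3/2})$ both terms are then nonpositive, whence $\Delta\sqrt m\le 0$ and the risk is at most $nm$, the minimax risk of the maximum likelihood estimator. This yields minimaxity.

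The main obstacle I anticipate is not the superharmonicity inequality, which Theorem~\ref{norm-harmonic} already supplies, but the regularity conditions needed for the generalized Bayes estimator to be well-defined and for Stein's identity to be applicable. Concretely, I would need to verify that $m(X)$ is finite and strictly positive for every $X$, so that the posterior is proper, and that $\sqrt m$ has the local integrability and smoothness required by the risk formula. Near the origin $\pi_{p,\alpha}$ diverges like $\|M\|_p^{-\alpha}$, so finiteness of $m$ reduces to an integrability condition at $M=0$ expressed in the singular-value coordinates, using the volume element implicit in the change of variables underlying Proposition~\ref{matrix-lapla}; the upper bounds on $\alpha$ should keep this integrable, but I would check it explicitly. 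Once finiteness and the requisite smoothness of $m$ are confirmed, the minimaxity conclusion follows directly from the superharmonicity established in Theorem~\ref{norm-harmonic}.
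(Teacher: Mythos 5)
Your proposal is correct and matches the paper's (implicit) argument exactly: the corollary is stated without a separate proof because it follows immediately from Theorem~\ref{norm-harmonic} combined with Stein's classical result, cited in the Introduction, that a generalized Bayes estimator with respect to a superharmonic prior is minimax under the Frobenius loss. Your additional unpacking of that result---superharmonicity passing to the marginal $m$ by Gaussian convolution, then to $\sqrt{m}$, and the unbiased risk identity---is the standard machinery behind the citation and is sound.
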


\begin{remark}
    For case $p\in[1,2]$, We can show that $\Vert M\Vert_{p}^{-\alpha}$ is not superharmonic when $\alpha>nm-(1-p/2)m(m+1)-p$. Define $f=\sum_{i=1}^m\sigma_i^p$. If $\sigma_i=N-i$ and $N$ is large enough, the Laplacian \eqref{laplacian} has the same sign as \eqref{lapla-2}. Because $\beta p=-\alpha<(1-p/2)m(m+1)+p-nm$, \eqref{lapla-2} is positive. Thus, the Laplacian \eqref{laplacian} can be positive in this case.
\end{remark}

\begin{remark}
    For case $p\in (0,1)$, there are instances where $\Vert M\Vert_{p}^{-\alpha}$ is also superharmonic for some $\alpha > nm-m(m+1)+(m-1)p$. For example, when $m=2$, $n=3$, $p=1/2$ and $\alpha=1$, $\Vert M\Vert_{p}^{-\alpha}$ is superharmonic. It can be proven by showing \eqref{laplacian} is nonpositive. However, for general $m$, $n$, and $p$, it seems hard to obtain accurate upper bound of $\alpha$ that makes $\Vert M\Vert_{p}^{-\alpha}$ a superharmonic function.
\end{remark}

Using Theorem \ref{norm-harmonic}, the prior $\pi_{p,\alpha}$ with $p=1$ and $\alpha=nm-m(m+1)/2-1$ is superharmonic. 
We call it a nuclear norm shrinkage (NNS) prior:
\begin{align}
\pi_{\mathrm{NNS}}(M) = \Vert M\Vert_{1}^{m(m+1)/2+1-nm}. \label{NNSprior}
\end{align}

The proposed prior works well in Bayesian prediction as well.
Suppose that we observe $X \sim {\rm N} (M, I_n, I_p)$ and predict ${Y} \sim {\rm N} (M, I_n, I_p)$ by a predictive density $\hat{p}({Y} \mid X)$. 
We evaluate predictive densities by the Kullback--Leibler loss:
\begin{align*}
	D ({p}(\cdot \mid M), \hat{p} (\cdot \mid X))
	= \int {p}({Y} \mid M) \log \frac{{p}({Y} \mid M)}{\hat{p} ({Y} \mid X)} {\rm d} {Y}. 
\end{align*}
The Bayesian predictive density based on a prior $\pi(M)$ is defined as
\begin{align*}
	\hat{p}_{\pi} ({Y} \mid X) & = \int {p} ({Y} \mid M) \pi (M \mid X) {\rm d} M,
\end{align*}
where $\pi(M \mid X)$ is the posterior distribution of $M$ given $X$, and it minimizes the Bayes risk \citep{aitchison1975goodness}.
The Bayesian predictive density with respect to the uniform prior is minimax.
However, it is inadmissible and dominated by Bayesian predictive densities based on superharmonic priors \citep{komaki2001shrinkage,george2006improved}.
Thus, we obtain the following result.

\begin{corollary}
For $p\in[1,2]$, the Bayesian predictive density with respect to $\pi_{p,\alpha}(M)$ is minimax if $\alpha\in[0,nm-(1-p/2)m(m+1)-p]$. 
For $p\in(0,1),$ the Bayesian predictive density with respect to $\pi_{p,\alpha}(M)$ is minimax if $\alpha\in[0,nm-m(m+1)+(m-1)p]$.
\end{corollary}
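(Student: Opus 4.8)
The plan is to reduce the claim directly to the superharmonicity already established in Theorem~\ref{norm-harmonic}, mirroring exactly how the preceding corollary deduced minimaxity of the generalized Bayes estimator. First I would fix the benchmark. In the present Gaussian setting, with $X\sim{\rm N}(M,I_n,I_p)$ and $Y\sim{\rm N}(M,I_n,I_p)$, the Bayesian predictive density $\hat{p}_U$ with respect to the uniform (Lebesgue) prior is the best invariant predictive density and is minimax under the Kullback--Leibler loss, as recalled just above the statement and established in \citep{aitchison1975goodness,komaki2001shrinkage}. It therefore suffices to show that $\hat{p}_{\pi_{p,\alpha}}$ dominates $\hat{p}_U$ under the Kullback--Leibler loss for every $M$ on the stated ranges of $\alpha$.

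Next I would invoke the general domination theorem of \cite{komaki2001shrinkage,george2006improved}: if the prior $\pi$ is superharmonic, then $\hat{p}_\pi$ dominates $\hat{p}_U$ under the Kullback--Leibler loss, and hence is itself minimax. The key link in that argument is that the prior marginal $m_\pi(X;v)=\int p_v(X\mid M)\,\pi(M)\,{\rm d}M$, obtained by convolving $\pi$ with an isotropic Gaussian kernel of variance $v$, inherits superharmonicity from $\pi$, because convolution with a rotationally symmetric kernel preserves superharmonicity; the KL risk difference between $\hat{p}_\pi$ and $\hat{p}_U$ is then expressed as an integral over the variance parameter of a term governed by the Laplacian of the marginal, and superharmonicity makes each integrand nonpositive. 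Since Theorem~\ref{norm-harmonic} guarantees that $\pi_{p,\alpha}$ is superharmonic precisely when $\alpha\in[0,nm-(1-p/2)m(m+1)-p]$ for $p\in[1,2]$ and when $\alpha\in[0,nm-m(m+1)+(m-1)p]$ for $p\in(0,1)$, domination—and thus minimaxity—follows on exactly these ranges.

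The hard part will be the regularity conditions that make the domination theorem applicable to the improper prior $\pi_{p,\alpha}$, rather than any new inequality. Specifically, I would need to verify that the predictive density is well defined, i.e. that the marginal $m_{\pi_{p,\alpha}}(X;v)$ is finite (posterior propriety) and that the resulting risk is finite. Here $\pi_{p,\alpha}(M)=\|M\|_p^{-\alpha}$ has its only singularity at $M=0$, where $\|M\|_p$ is comparable to $\|M\|_{\mathrm{F}}$ by norm equivalence on the fixed-size matrix space, so near the origin $\pi_{p,\alpha}$ behaves like $\|M\|_{\mathrm{F}}^{-\alpha}$, which is locally integrable in $\mathbb{R}^{n\times m}$ as soon as $\alpha<nm$; all the admissible values of $\alpha$ satisfy this strict bound. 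Away from the origin $\pi_{p,\alpha}$ is bounded and decays, so the Gaussian kernel secures integrability at infinity. This is the same integrability underlying the monotone approximation $f_k\uparrow\pi_{p,\alpha}$ used in the proof of Theorem~\ref{norm-harmonic}. Once finiteness and the superharmonicity transfer to $m_{\pi_{p,\alpha}}(\cdot;v)$ are confirmed, the conclusion is immediate, and the proof is essentially a one-line invocation of Theorem~\ref{norm-harmonic} together with \cite{komaki2001shrinkage,george2006improved}.
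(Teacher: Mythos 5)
Your proposal is correct and follows essentially the same route as the paper: the corollary is obtained by combining the superharmonicity ranges from Theorem~\ref{norm-harmonic} with the fact that Bayesian predictive densities based on superharmonic priors dominate the minimax uniform-prior predictive density \citep{komaki2001shrinkage,george2006improved}. Your additional verification of local integrability of $\pi_{p,\alpha}$ near the origin (needed for the marginal to be well defined) is a sound supplementary detail that the paper leaves implicit, but it does not constitute a different approach.
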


\section{Numerical experiments}\label{sec:experiment}

We examine the performance of the proposed estimators and priors in simulation.

Figure~\ref{fig1} plots the Frobenius risk of several estimators for $m=3$ and $n=5$.
We sampled $X$ $10^5$ times and approximated the Frobenius risk by the sample mean of the Frobenius loss. 
We compare the nuclear norm shrinkage estimator (NNSE) \eqref{NNSest}, James--Stein (JS) estimator \eqref{JS}, Efron--Morris (EM) estimator \eqref{EM} and the generalized Bayes estimators based on the nuclear norm shrinkage (NNS) prior \eqref{NNSprior}, Jeffreys prior $\pi(M) \equiv 1$ (maximum likelihood estimator $\hat M=X$), Stein's prior \eqref{stein}, and the singular value shrinkage (SVS) prior \eqref{SVS}.  
Figure \ref{fig1}(a) shows the risk functions for $m = 3$, $n = 5$, $\sigma_1= 20$ and $\sigma_3 = 0$.  
Figure \ref{fig1}(b) shows the risk functions for $m = 3$, $n = 5$, $\sigma_2=\sigma_3 = 0$. 
We can see that all the estimators outperform the maximum likelihood estimator $\hat M=X$, which is also the generalized Bayes estimator with respect to the Jeffreys prior. 
The generalized Bayes estimators based on the nuclear norm shrinkage prior, Stein's prior, and the singular value shrinkage prior have a similar performance to the nuclear norm shrinkage estimator, James--Stein estimator, and Efron--Morris estimator, respectively. 
When all the singular values are close to zero, the James--Stein estimator and Stein's prior perform the best. 
When $\sigma_1$ and $\sigma_2$ are moderately large, the nuclear norm shrinkage estimator and nuclear norm shrinkage prior perform better than others. 
When $\sigma_1$ and $\sigma_2$ are large, the Efron--Morris estimator and singular value shrinkage prior perform the best.

\begin{figure}[H]
	\centering
	\includegraphics[width=\textwidth]{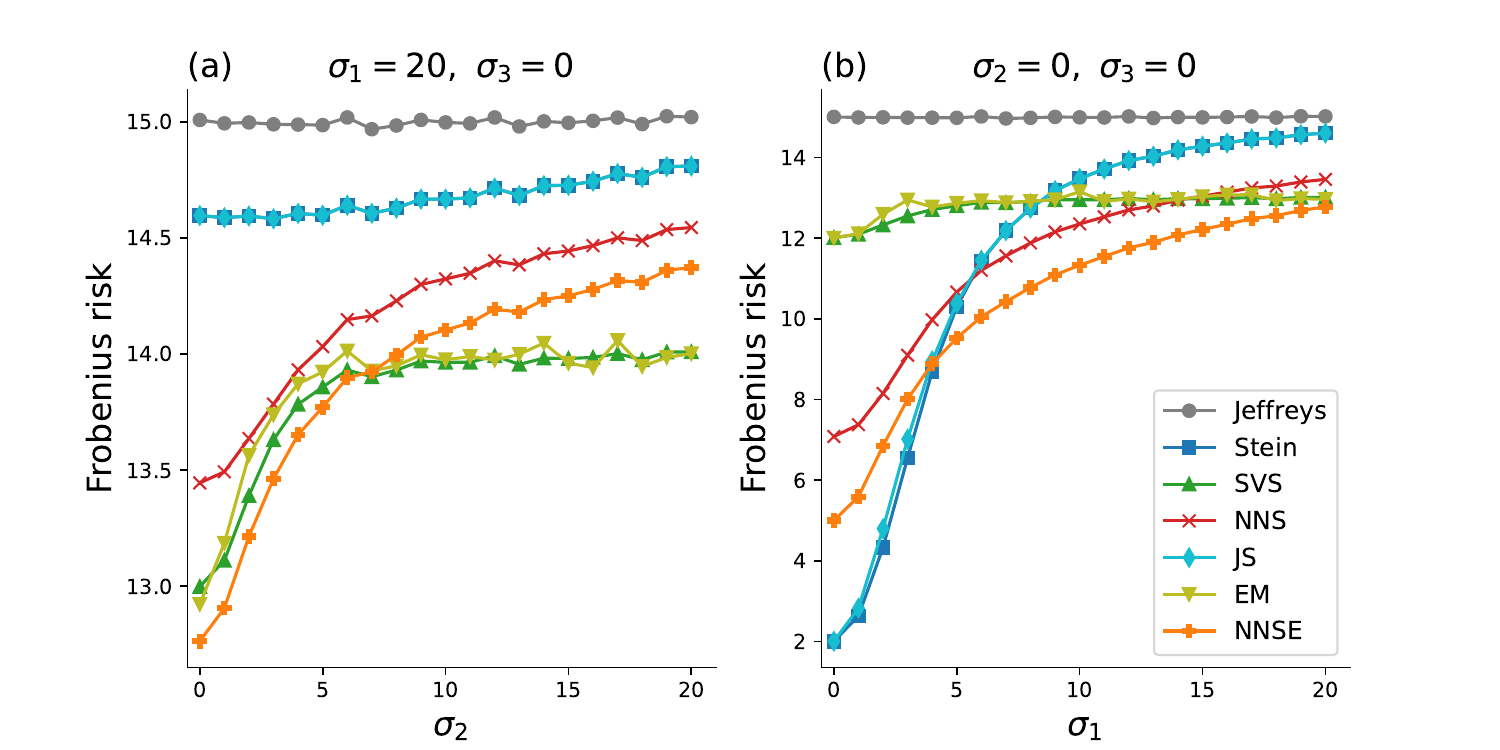}
	\caption{Frobenius risk functions of shrinkage estimators and generalized Bayes estimators when $m = 3$, $n = 5$.}
        \label{fig1}
\end{figure}

Figure~\ref{fig2} compares the shrinkage estimators discussed in Section~\ref{sec:est}. 
Here, we only consider positive-part estimators because they dominate the corresponding shrinkage estimators. 
We also consider the positive-part modified Efron--Morris (mEM+) estimator proposed by \cite{efron1976multivariate}:
\[
\hat\sigma_i=\max \left(\sigma_i-\frac{n-m-1}{\sigma_i}-(m-1)(m+2) \frac{\sigma_i}{\sum_{j=1}^m\sigma_j^2}, 0 \right), \quad i=1,\dots,m.
\]
Figure~\ref{fig2} plots the risk functions with respect to $\sigma_1$ when $m = 20$, $n = 100$, $\sigma_i=(6-i)\sigma_1/5$ for
$i=2,\dots,5$, and $\sigma_6=\cdots=\sigma_{20}=0$. 
When $\sigma_1$ is less than $10$, JS+ estimator performs the best. 
When $\sigma_1$ is in $[15,25]$, $M^{*,+}_{1}$ attains the best performance. 
When $\sigma_1$ is larger than $30$, $M^{*,+}_{\frac{1}{2}}$ outperforms other estimators.

\begin{figure}[H]
	\centering
	\includegraphics[width=.8\textwidth]{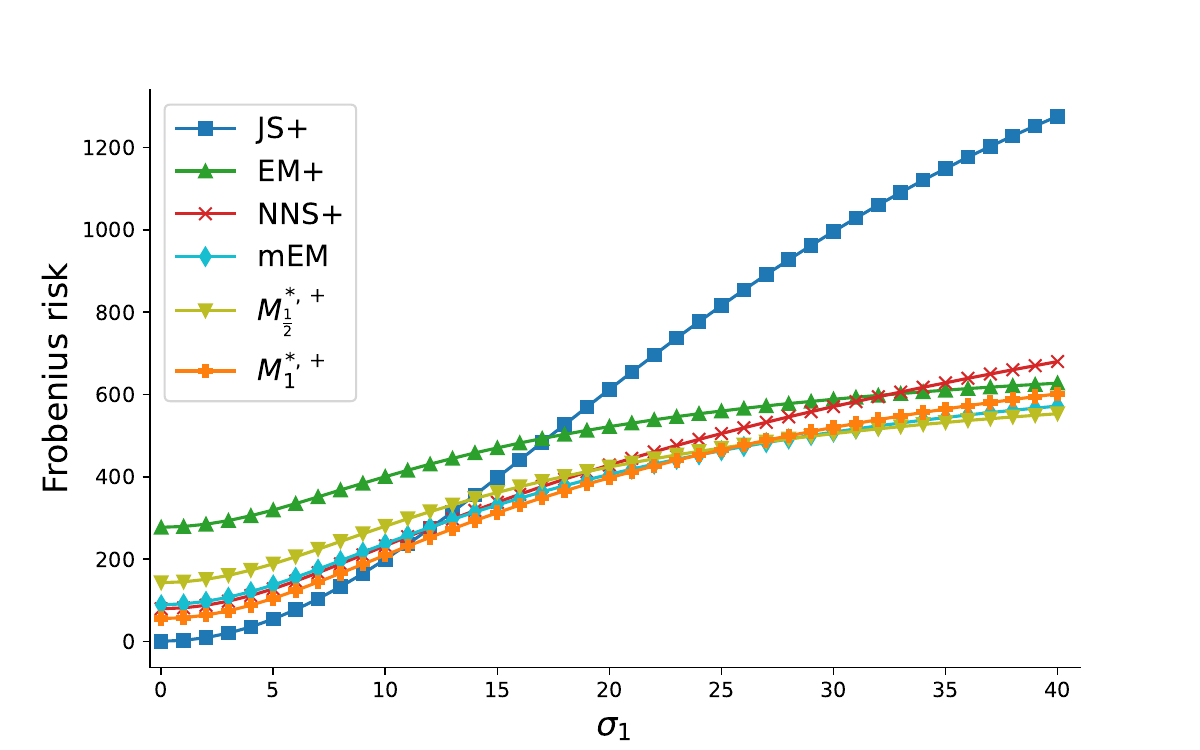}
	\caption{Frobenius risk functions of shrinkage estimators when $m = 20$, $n = 100$, $\sigma_i=(6-i)\sigma_1/5$ for
$i=2,\dots,5$, and $\sigma_6=\cdots=\sigma_{20}=0$.}
        \label{fig2}
\end{figure}

Table~\ref{frob_risk} presents the Frobenius risk on two types of low rank matrices. 
When $\sigma_1$ is small, JS+ has the best performance. 
When $\sigma_1$ is large, $\hat{M}^{*,+}_{\frac{1}{2}}$ has the best performance. 
It can be seen that $\hat{M}^{*,+}_{\frac{1}{2}}$ outperforms EM+ in all cases. 
Although the two estimators $\hat{M}^{*,+}_{1}$ and NNS+ have a similar form to the SVT estimator, $\hat{M}^{*,+}_{1}$ outperforms NNS+ in all cases.

\begin{table}[h]
\centering
\caption{Frobenius risk of shrinkage estimators}
\label{frob_risk}
\begin{center}
\begin{tabular}{c c c c c c c}
 \hline
 & mEM+ &EM+ & $M^{*,+}_{\frac{1}{2}}$ & NNS+ & $M^{*,+}_{1}$ & JS+\\
 \hline
 \multicolumn{7}{l}{$n=20$, $m=15$, $\sigma_1=\cdots=\sigma_{10}$, $\sigma_{11}=\dots=\sigma_{15}=0$} \\
 \hline
 $\sigma_1=1$ &13.6 & 215.6 & 120.0 & 75.0 & 44.1 & 10.8*\\ 
$\sigma_1=10$ & 223.6* & 268.7 & 243.9 & 239.4 & 232.1 &231.5\\
 $\sigma_1=100$ & 273.4 & 274.0 & 263.6* & 294.7 & 282.5 &299.1\\
 $\sigma_1=1000$ & 274.1 & 274.1 & 263.0* & 299.5 & 286.2 & 300.0\\
 \hline
 \multicolumn{7}{l}{$n=100$, $m=20$, $\sigma_i=\sigma_1\max(1-(i-1)/5,0)$}\\
 \hline
 $\sigma_1=1$ &91.7& 279.8 & 145.3 & 82.1 & 58.2 & 3.2*\\ 
$\sigma_1=10$ & 238.2 & 400.8 & 280.6 & 233.0 & 210.2 &200.0*\\
 $\sigma_1=100$ & 691.6 & 703.6 & 646.7* & 1085.5 & 801.5 &1833.5\\
 $\sigma_1=1000$ & 728.8 & 728.9 & 662.2* & 1842.2 & 958.0 & 1998.2\\
 $\sigma_1=10000$ & 729.2 & 729.2 & 656.1* & 1983.1 & 974.5 & 2000.0\\
 \hline
\end{tabular}
\end{center}
\end{table}

Finally, we consider the prediction problem, where we predict $Y\sim \mathrm{N}(M,I_n, I_m)$ using $X\sim \mathrm{N}(M,I_n, I_m)$. 
We compare the Kullback--Leibler risk  of the Bayesian predictive density $p_{\pi}(Y\mid X)$ based on the Jeffreys prior $\pi(M) \equiv 1$, Stein's prior \eqref{stein}, singular value shrinkage prior \eqref{SVS}, and the nuclear norm shrinkage prior \eqref{NNSprior}. 
We sampled $(X, Y)$ $10^5$ times and
approximated the Kullback--Leibler risk by the sample mean of $\log p(Y\mid M)-\log p_{\pi}(Y\mid X)$.  
We set $m = 3$, $n = 5$, and $\sigma_3=0$. Figure \ref{fig3}(a) shows the risk functions for $\sigma_1=20$.  Figure \ref{fig3}(b) shows the risk functions for $\sigma_2= 0$. We can see that all the priors outperform the Jeffreys prior. When all the singular values are close to zeros, Stein prior performs the best. When $\sigma_1$ and $\sigma_2$ are moderately large, the nuclear norm shrinkage prior performs the best. When $\sigma_1$ and $\sigma_2$ are large, the singular value shrinkage prior performs the best.

\begin{figure}[H]
	\centering
	\includegraphics[width=\textwidth]{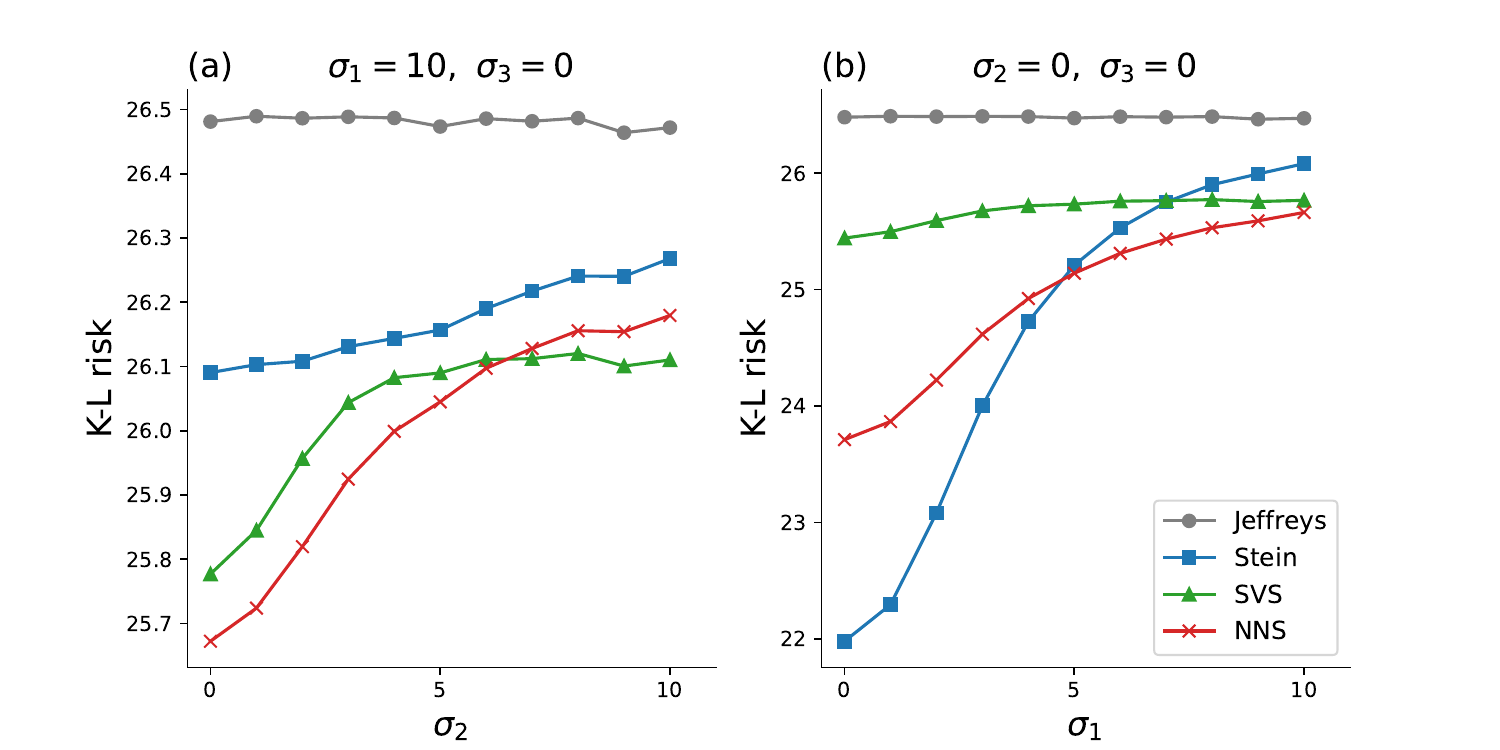}
	\caption{Kullback--Leibler risk functions of Bayesian predictive densities when $m = 3$, $n = 5$.}
        \label{fig3}
\end{figure}

\bibliographystyle{acmtrans.bst}
\bibliography{scholar.bib}

\appendix
\section{Technical Lemmas}
\begin{lemma} \label{nns-dec}
When $p\in(0,2]$,
$(x^p-1)/(x^2-1)$ is a decreasing function of $x>1$ and a decreasing function of $x\in(0,1)$. Moreover, $(x^p-1)/(x^2-1)<p/2$ when $x>1$ and $(x^p-1)/(x^2-1)>p/2$ when $x\in(0,1).$
\end{lemma}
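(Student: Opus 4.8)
The plan is to reduce both the monotonicity claim and the two inequalities to the sign of a single auxiliary function. Write $g(x)=(x^p-1)/(x^2-1)$ for $x>0$, $x\neq1$. By l'H\^{o}pital's rule $\lim_{x\to1}g(x)=p/2$, which will later supply the boundary value driving the two strict inequalities. Since the denominator of $g'$ is the positive quantity $(x^2-1)^2$, the sign of $g'(x)$ is exactly the sign of its numerator $N(x)=px^{p-1}(x^2-1)-2x(x^p-1)$, so the entire problem is to control the sign of $N$ on $(0,1)$ and on $(1,\infty)$.

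First I would expand and factor $N(x)=x\,h(x)$ with $h(x)=(p-2)x^p-px^{p-2}+2$. Because $x>0$, it suffices to show $h(x)<0$ for every $x\neq1$. A direct substitution gives $h(1)=0$, so $x=1$ is a candidate extremum, and the key observation is that the derivative factors cleanly as $h'(x)=p(p-2)x^{p-3}(x^2-1)$. For $p\in(0,2)$ the constant $p(p-2)$ is negative, hence $h'>0$ on $(0,1)$ and $h'<0$ on $(1,\infty)$. Thus $h$ rises to its unique maximum $h(1)=0$ and then falls, so $h(x)<0$ for all $x\neq1$, giving $g'(x)<0$ on both $(0,1)$ and $(1,\infty)$. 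This establishes strict monotone decrease on each interval.

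With monotonicity in hand, the two bounds are immediate from the boundary value $\lim_{x\to1}g=p/2$: on $(1,\infty)$ the decreasing $g$ stays below its left-endpoint limit, so $g(x)<p/2$, while on $(0,1)$ it stays above its right-endpoint limit, so $g(x)>p/2$. The degenerate case $p=2$ is trivial, since there $g\equiv1=p/2$.

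I do not anticipate a serious obstacle here; the lemma is essentially a calculus exercise. The only nonroutine step is recognizing that the sign of $g'$ is governed by the auxiliary function $h$ whose derivative factors so that its sole interior critical point sits precisely at $x=1$, where $h$ itself vanishes. Once this is noticed, the argument collapses to a one-line monotonicity observation for $h$ rather than any attempt to bound $N$ directly. Care is needed only to keep the inequalities strict throughout $p\in(0,2)$ and to record the constant case $p=2$ separately.
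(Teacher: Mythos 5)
Your proof is correct and follows essentially the same route as the paper's: the paper also reduces the sign of the derivative to the auxiliary function $h(x)=(p-2)x^p-px^{p-2}+2$ (which it calls $g$), notes $h(1)=0$, factors $h'(x)=p(p-2)x^{p-3}(x^2-1)$ to conclude $h\le 0$, and then obtains the two bounds from the limit $\lim_{x\to 1}(x^p-1)/(x^2-1)=p/2$. Your write-up is if anything slightly more careful, since you track strict negativity of $h$ away from $x=1$ (the paper only records $h\le 0$) and you isolate the constant case $p=2$ explicitly.
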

\begin{proof}
    The differential function is 
    $$\frac{(x^p-1)'(x^2-1)-(x^p-1)(x^2-1)'}{(x^2-1)^2}=\frac{(p-2)x^{p+1}-px^{p-1}+2x}{(x^2-1)^2}.$$
    We define $g(x)=(p-2)x^p-px^{p-2}+2$. Then the  differential function is $xg(x)/(x^2-1)^2$. We need only show that $g(x)\le0$ for any $x>0$. Note that $g(1)=0$ and $$g(x)'=(p-2)px^{p-1}-p(p-2)x^{p-3}=(2-p)px^{p-3}(1-x^2).$$ Thus, $g(x)\le0$.

    Moreover, $\lim_{x\to1}(x^p-1)/(x^2-1)=p/2$. Therefore, $(x^p-1)/(x^2-1)<p/2$ when $x>1$ and $(x^p-1)/(x^2-1)>p/2$ when $x\in(0,1).$
\end{proof}

\begin{lemma} \label{bernoulli}
When $p\in[1,2]$ and $\sigma_i,\ \sigma_j$ are different nonnegative numbers,
\begin{equation}
        \frac{\sigma_i^{2p}-\sigma_j^{2p}}{\sigma_i^{2}-\sigma_j^2}\ge \frac{p}{2}(\sigma_i^{2p-2}+\sigma_j^{2p-2}).
    \end{equation}
\end{lemma}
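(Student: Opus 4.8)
The plan is to exploit the symmetry of the inequality in $\sigma_i,\sigma_j$ and reduce it to a single-variable calculus problem. Both sides are symmetric (swapping $i$ and $j$ flips the sign of numerator and denominator on the left simultaneously) and homogeneous of degree $2p-2$, so I would first assume without loss of generality that $\sigma_i>\sigma_j\ge0$. The degenerate case $\sigma_j=0$ can be disposed of immediately: there the claim reduces to $\sigma_i^{2p-2}\ge(p/2)\sigma_i^{2p-2}$, which holds because $p\le2$. For the main case $\sigma_j>0$, I would divide both sides by $\sigma_j^{2p-2}$ and set $y=(\sigma_i/\sigma_j)^2>1$, turning the statement into the equivalent one-variable inequality
\[
\frac{y^p-1}{y-1}\ge\frac{p}{2}\left(y^{p-1}+1\right),\qquad y>1.
\]

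Since $y-1>0$, I would clear the denominator and study the auxiliary function
\[
h(y)=y^p-1-\frac{p}{2}\left(y^{p-1}+1\right)(y-1),
\]
aiming to show $h(y)\ge0$ for $y\ge1$. The key observations are that $h$ vanishes to second order at the left endpoint and is convex beyond it. Direct differentiation gives $h(1)=0$ and $h'(1)=0$, and after expanding $(y^{p-1}+1)(y-1)=y^p-y^{p-1}+y-1$ the second derivative factors as
\[
h''(y)=\frac{p(p-1)(2-p)}{2}\,y^{p-3}(y-1).
\]
For $y>1$ and $p\in[1,2]$ every factor here is nonnegative, so $h''\ge0$; hence $h'$ is nondecreasing with $h'(1)=0$, so $h'\ge0$, so $h$ is nondecreasing with $h(1)=0$, which gives $h\ge0$ as required.

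Conceptually, this is just the statement that the trapezoidal rule underestimates $\int_{\sigma_j^2}^{\sigma_i^2}p\,t^{p-1}\,\du t=\sigma_i^{2p}-\sigma_j^{2p}$ because the integrand $t\mapsto p\,t^{p-1}$ is concave when $p\in[1,2]$ (its third derivative $p(p-1)(p-2)t^{p-3}$ is nonpositive); the function $h$ is a convenient bookkeeping device for this fact. The main obstacle is purely computational: obtaining the clean factorization of $h''$ and recognizing that it is exactly the three-factor product $p(p-1)(2-p)$ that pins the sign down, which is what forces the hypothesis $p\in[1,2]$, since all three factors are then nonnegative. I would also keep an eye on the boundary situations — at $p=1$ and $p=2$ both sides coincide and the inequality degenerates to an identity, and at $\sigma_j=0$ with $p=1$ the symbol $0^{p-1}$ must be read with the convention $0^0=1$ — so a careful writeup should note that these cases give equality rather than strict inequality.
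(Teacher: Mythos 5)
Your proof is correct and follows essentially the same route as the paper's: both reduce the claim by homogeneity to a one-variable inequality and show the difference function is nonnegative by a two-stage monotonicity argument from the endpoint (the paper factors $g'(x)=pxh(x)$ and shows $h(1)=0$, $h'\ge 0$; you show $h(1)=h'(1)=0$, $h''\ge 0$, which is the same idea phrased as convexity). Your substitution $y=(\sigma_i/\sigma_j)^2$ merely cleans up the algebra relative to the paper's $x=\sigma_i/\sigma_j$, and your factorization $h''(y)=\tfrac{p(p-1)(2-p)}{2}\,y^{p-3}(y-1)$ checks out.
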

\begin{proof}
    We need only show that $x^{2p}-1\ge p/2(x^{2p-2}+1)(x^2-1)$ for any $x\ge 1$. Define $$g(x)=x^{2p}-1- p/2(x^{2p-2}+1)(x^2-1).$$ Then $$g'(x)=(2-p)px^{2p-1}-px+p(p-1)x^{2p-3}=:pxh(x),$$ where $h(x)=(2-p)x^{2p-2}-1+(p-1)x^{2p-4}$. Note that $$h'(x)=(2-p)(2p-2)x^{2p-3}-(p-1)(4-2p)x^{2p-5}=(2-p)(2p-2)(x^{2p-3}-x^{2p-5})\ge0.$$ Thus, $h(x)\ge h(1)=0$. Thus, $g'(x)=pxh(x)\ge0$. Hence $g(x)\ge g(1)=0.$
\end{proof}

\begin{lemma} \label{chebyshev}
(Chebyshev's sum inequality)
If $a_1\ge a_2\ge\cdots\ge a_m$ and $b_1\le b_2\le\cdots\le b_m$, then $$\Big(\sum_{i=1}^m a_i\Big)\Big(\sum_{i=1}^m b_i\Big)\ge m\sum_{i=1}^m a_ib_i.$$
\end{lemma}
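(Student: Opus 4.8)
The plan is to prove this by the standard pairwise-difference identity, which turns the claimed inequality into a manifestly signed double sum. First I would expand the symmetric double sum $\sum_{i,j}(a_i-a_j)(b_i-b_j)$. Multiplying out the four products and using $\sum_{i,j}a_i b_i = m\sum_i a_i b_i$ together with $\sum_{i,j}a_i b_j = (\sum_i a_i)(\sum_j b_j)$, one obtains the exact identity
\[
\sum_{i=1}^m\sum_{j=1}^m (a_i-a_j)(b_i-b_j) = 2m\sum_{i=1}^m a_i b_i - 2\Big(\sum_{i=1}^m a_i\Big)\Big(\sum_{j=1}^m b_j\Big).
\]
Equivalently, $\big(\sum_i a_i\big)\big(\sum_j b_j\big) - m\sum_i a_i b_i = -\tfrac{1}{2}\sum_{i,j}(a_i-a_j)(b_i-b_j)$, so the lemma reduces to showing that the right-hand double sum is nonpositive.

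Next I would check the sign of each summand using the opposite monotonicity of the two sequences. Fix any pair $(i,j)$. Since $a_1\ge\cdots\ge a_m$ and $b_1\le\cdots\le b_m$, the differences $a_i-a_j$ and $b_i-b_j$ always have opposite signs (or one of them vanishes): if $i\le j$ then $a_i-a_j\ge 0$ while $b_i-b_j\le 0$, and the reverse holds if $i\ge j$. Hence $(a_i-a_j)(b_i-b_j)\le 0$ for every $(i,j)$, so $\sum_{i,j}(a_i-a_j)(b_i-b_j)\le 0$. Combining this with the identity gives $\big(\sum_i a_i\big)\big(\sum_j b_j\big)\ge m\sum_i a_i b_i$, which is exactly the claim.

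There is essentially no hard step here; the only point that needs care is the direction of the inequality, which is forced by the fact that $a$ is nonincreasing while $b$ is nondecreasing (if both were sorted the same way the identity would instead yield the reverse inequality). An alternative route would be to invoke the rearrangement inequality: opposite sorting minimizes $\sum_i a_i b_{\sigma(i)}$ over permutations $\sigma$, and averaging the bound $\sum_i a_i b_{\sigma_k(i)}\ge\sum_i a_i b_i$ over the $m$ cyclic shifts $\sigma_k$ collapses the left-hand side to $\big(\sum_i a_i\big)\big(\sum_j b_j\big)$. I would nonetheless prefer the identity-based argument, since it is fully self-contained and does not rely on rearrangement as a black box.
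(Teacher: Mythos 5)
Your proof is correct: the pairwise-difference identity $\bigl(\sum_i a_i\bigr)\bigl(\sum_i b_i\bigr) - m\sum_i a_i b_i = -\tfrac{1}{2}\sum_{i,j}(a_i-a_j)(b_i-b_j)$ holds exactly, and oppositely ordered sequences make every summand $(a_i-a_j)(b_i-b_j)$ nonpositive, which gives the claim. The paper itself states this lemma without proof, treating Chebyshev's sum inequality as a classical black-box result, so there is no internal argument to compare against; your identity-based derivation is the standard self-contained proof of it, and the rearrangement-plus-cyclic-averaging alternative you mention is equally valid.
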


\begin{lemma}\label{chebyshev-2}
When $p\in(0,2]$ and $\sigma_1,\dots,\sigma_m$ are different nonnegative numbers,
$$\Big(\sum_{i=1}^m\sigma_i^p\Big)\sum_{i<j}\frac{\sigma_i^p-\sigma_j^p}{\sigma_i^2-\sigma_j^2}\ge\frac{m}{2}\sum_{i<j}\frac{\sigma_i^{2p}-\sigma_j^{2p}}{\sigma_i^2-\sigma_j^2}.$$  
\end{lemma}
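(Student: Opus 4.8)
The plan is to reduce the claimed inequality to Chebyshev's sum inequality (Lemma~\ref{chebyshev}) after a symmetrization. Write $a_i = \sigma_i^p$ and, for $i \neq j$, set $c_{ij} = (\sigma_i^p - \sigma_j^p)/(\sigma_i^2 - \sigma_j^2)$, which is well defined and positive since the $\sigma_i$ are distinct and $p>0$; note $c_{ij}=c_{ji}$. Using the factorization $\sigma_i^{2p} - \sigma_j^{2p} = (\sigma_i^p - \sigma_j^p)(\sigma_i^p + \sigma_j^p)$, the right-hand side becomes $\frac{m}{2}\sum_{i<j} c_{ij}(a_i + a_j)$. Introducing $r_i = \sum_{j \neq i} c_{ij}$, I would convert both $i<j$ sums into full sums over ordered pairs using the symmetry of $c_{ij}$; a short computation then gives $\sum_{i<j} c_{ij} = \tfrac{1}{2}\sum_i r_i$ and $\sum_{i<j} c_{ij}(a_i + a_j) = \sum_i a_i r_i$. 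Hence the claim is equivalent to $\left(\sum_i a_i\right)\left(\sum_i r_i\right) \ge m \sum_i a_i r_i$, which is precisely the conclusion of Chebyshev's sum inequality provided $(a_i)$ and $(r_i)$ are oppositely ordered.

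Next I would verify the orderings. Since $\sigma_1 \ge \cdots \ge \sigma_m$ and $p>0$, the sequence $a_i = \sigma_i^p$ is nonincreasing. The crux is to show that $(r_i)$ is nondecreasing, i.e. $r_1 \le \cdots \le r_m$. For this I would first record that each $c_{ij}$, viewed as a function of the larger singular value with the other fixed, is decreasing: writing $c_{ij} = \sigma_j^{p-2}\,\psi(\sigma_i/\sigma_j)$ with $\psi(x) = (x^p-1)/(x^2-1)$, Lemma~\ref{nns-dec} gives that $\psi$ is decreasing on $(0,\infty)$, so $c_{ij}$ decreases as $\sigma_i$ increases (the degenerate case $\sigma_j=0$ gives $c_{ij}=\sigma_i^{p-2}$, again decreasing since $p\le 2$).

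To compare $r_a$ and $r_b$ for $a<b$ (so $\sigma_a \ge \sigma_b$), I would exploit a cancellation: the term $j=b$ in $r_a$ and the term $j=a$ in $r_b$ are both equal to $c_{ab}$, so
\[
r_b - r_a = \sum_{j \neq a,b}\big(c_{bj} - c_{aj}\big).
\]
Each summand is nonnegative because $\sigma_a \ge \sigma_b$ and $c_{ij}$ is decreasing in its first singular value, whence $c_{bj} \ge c_{aj}$. Thus $r_a \le r_b$, so $(r_i)$ is nondecreasing. Applying Chebyshev's sum inequality to the nonincreasing sequence $(a_i)$ and the nondecreasing sequence $(r_i)$ yields $\left(\sum_i a_i\right)\left(\sum_i r_i\right) \ge m\sum_i a_i r_i$, the desired inequality.

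I expect the monotonicity of $(r_i)$ to be the main obstacle: it is not term-by-term obvious, and the clean route is the cancellation identity above combined with the single-variable monotonicity of $\psi$ supplied by Lemma~\ref{nns-dec}. Everything else is bookkeeping with symmetric sums.
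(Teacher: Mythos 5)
Your proof is correct, and it takes a genuinely different (and leaner) decomposition than the paper's. The paper proves the lemma with \emph{two} nested applications of Chebyshev's sum inequality: an inner one, for each fixed $i$, applied to the sequences $(\sigma_i^p+\sigma_j^p)_{j\neq i}$ and $(c_{ij})_{j\neq i}$, yielding $\bigl(\sum_{j\neq i}(\sigma_i^p+\sigma_j^p)\bigr)\bigl(\sum_{j\neq i}c_{ij}\bigr)\ge(m-1)\sum_{j\neq i}(\sigma_i^p+\sigma_j^p)c_{ij}$, and an outer one over $i$ applied to $A_i=\sum_{j\neq i}(\sigma_i^p+\sigma_j^p)$ and $r_i=\sum_{j\neq i}c_{ij}$, after which the two bounds are combined with the normalization $1/(4(m-1))$. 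Your symmetrization identity $\sum_{i<j}c_{ij}(a_i+a_j)=\sum_i a_i r_i$ is exact, so it replaces the paper's inner Chebyshev \emph{inequality} by an \emph{identity}; you then need only a single application of Lemma~\ref{chebyshev}, to the pair $(a_i)$, $(r_i)$, which is essentially the pair used in the paper's outer step (the paper's $A_i$ is an increasing affine function of your $a_i$). Both proofs consume the same monotonicity input from Lemma~\ref{nns-dec}, namely that $c_{ij}$ decreases in either singular value. A further point in your favor: the monotonicity of $(r_i)$ is also required for the paper's outer Chebyshev, but the paper merely asserts it; since $r_a$ and $r_b$ run over different index sets, termwise comparison alone does not suffice, and the cancellation $r_b-r_a=\sum_{j\neq a,b}(c_{bj}-c_{aj})$ that you make explicit is precisely the missing justification. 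Net effect of your route: one fewer inequality, a tighter intermediate bound, no normalization bookkeeping, and a more careful treatment of the one delicate monotonicity step.
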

\begin{proof}
For given $i$, using Lemma~\ref{nns-dec}, $$\frac{\sigma_i^p-\sigma_j^p}{\sigma_i^2-\sigma_j^2}=\sigma_i^{p-2}\frac{(\sigma_j/\sigma_i)^p-1}{(\sigma_j/\sigma_i)^2-1}$$ is a decreasing function of $\sigma_j$. Therefore, using $\sigma_1\ge\cdots\ge\sigma_m$, $(\sigma_i^p-\sigma_j^p)/(\sigma_i^2-\sigma_j^2)$ is a increasing function of $j$. For given $i$, $\sigma_i^p+\sigma_j^p$ is a decreasing function of $j$. Therefore, for given $i$, using Chebyshev's sum inequality, 
\begin{equation}
    \Big(\sum_{j\neq i}(\sigma_i^p+\sigma_j^p)\Big)\Big(\sum_{j\neq i}\frac{\sigma_i^p-\sigma_j^p}{\sigma_i^2-\sigma_j^2}\Big)\ge (m-1)\sum_{j\neq i}\frac{\sigma_i^{2p}-\sigma_j^{2p}}{\sigma_i^{2}-\sigma_j^2}. \label{cheb-1}
\end{equation} 
For given $j$, using Lemma~\ref{nns-dec}, $(\sigma_i^p-\sigma_j^p)/(\sigma_i^2-\sigma_j^2)$ is a decreasing function of $\sigma_i$. Therefore, if $\sigma_{i_1}\ge\sigma_{i_2}$, $$\sum_{j\neq i_1}(\sigma_{i_1}^p-\sigma_j^p)/(\sigma_{i_1}^2-\sigma_j^2)-\sum_{j\neq i_2}(\sigma_{i_2}^p-\sigma_j^p)/(\sigma_{i_2}^2-\sigma_j^2)$$ is nonpositive. Therefore, using $\sigma_1\ge\cdots\ge\sigma_m$, $\sum_{j\neq i}(\sigma_i^p-\sigma_j^p)/(\sigma_i^2-\sigma_j^2)$ is a increasing function of $i$. In addition, $\sum_{j\neq i}(\sigma_i^p+\sigma_j^p)=\sum_{j}\sigma_j^p+(m-2)\sigma_i^p$ is a decreasing function of $i$. Therefore, using Chebyshev's sum inequality,
\begin{equation}
    \Big(\sum_{i=1}^m\sum_{j\neq i}(\sigma_i^p+\sigma_j^p)\Big)\Big(\sum_{i=1}^m\sum_{j\neq i}\frac{\sigma_i^p-\sigma_j^p}{\sigma_i^2-\sigma_j^2}\Big) \ge m\sum_{i=1}^m\Big(\sum_{j\neq i}(\sigma_i^p+\sigma_j^p)\Big)\Big(\sum_{j\neq i}\frac{\sigma_i^p-\sigma_j^p}{\sigma_i^2-\sigma_j^2}\Big). \label{cheb-2}
\end{equation}
Combining \eqref{cheb-1} and \eqref{cheb-2}, we have
\begin{align*}
  \Big(\sum_{i=1}^m\sigma_i^p\Big)\sum_{i<j}\frac{\sigma_i^p-\sigma_j^p}{\sigma_i^2-\sigma_j^2}&= \frac{1}{4(m-1)}\Big(\sum_{i=1}^m\sum_{j\neq i}(\sigma_i^p+\sigma_j^p)\Big)\Big(\sum_{i=1}^m\sum_{j\neq i}\frac{\sigma_i^p-\sigma_j^p}{\sigma_i^2-\sigma_j^2}\Big) \notag\\
  &\ge \frac{m}{4(m-1)}\sum_{i=1}^m\Big(\sum_{j\neq i}(\sigma_i^p+\sigma_j^p)\Big)\Big(\sum_{j\neq i}\frac{\sigma_i^p-\sigma_j^p}{\sigma_i^2-\sigma_j^2}\Big)\notag\\
  &\ge \frac{m}{4}\sum_{i=1}^m\sum_{j\neq i}\frac{\sigma_i^{2p}-\sigma_j^{2p}}{\sigma_i^{2}-\sigma_j^2}=\frac{m}{2}\sum_{i<j}\frac{\sigma_i^{2p}-\sigma_j^{2p}}{\sigma_i^2-\sigma_j^2}.
\end{align*} 
\end{proof}

\begin{lemma} \label{nns-inq}
When $p\in[1,2]$ and $\sigma_1,\dots,\sigma_m$ are different nonnegative numbers, $$\Big(\sum_{i=1}^m\sigma_i^p\Big)\sum_{i<j}\frac{\sigma_i^p-\sigma_j^p}{\sigma_i^2-\sigma_j^2}\ge\frac{pm(m-1)}{4}\sum_{i=1}^m\sigma_i^{2p-2}.$$
\end{lemma}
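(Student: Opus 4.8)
The plan is to obtain the inequality by chaining the two immediately preceding lemmas and then performing a routine index count. Both hypotheses needed below are satisfied: the $\sigma_i$ are distinct nonnegative numbers, and $p\in[1,2]\subset(0,2]$, so Lemma~\ref{chebyshev-2} and Lemma~\ref{bernoulli} both apply.

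First I would invoke Lemma~\ref{chebyshev-2}, valid for $p\in(0,2]$, to pass from the product of sums on the left to a single sum of difference quotients:
\[
\Big(\sum_{i=1}^m\sigma_i^p\Big)\sum_{i<j}\frac{\sigma_i^p-\sigma_j^p}{\sigma_i^2-\sigma_j^2}\ge\frac{m}{2}\sum_{i<j}\frac{\sigma_i^{2p}-\sigma_j^{2p}}{\sigma_i^2-\sigma_j^2}.
\]
Next, because $p\in[1,2]$, Lemma~\ref{bernoulli} applies to each individual pair $(i,j)$ and bounds each summand from below:
\[
\frac{\sigma_i^{2p}-\sigma_j^{2p}}{\sigma_i^2-\sigma_j^2}\ge\frac{p}{2}\big(\sigma_i^{2p-2}+\sigma_j^{2p-2}\big).
\]

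The final step is a combinatorial simplification. In the double sum $\sum_{i<j}(\sigma_i^{2p-2}+\sigma_j^{2p-2})$, each index $k$ contributes $\sigma_k^{2p-2}$ once for every other index, hence $m-1$ times in total, so the sum collapses to $(m-1)\sum_{i=1}^m\sigma_i^{2p-2}$. Substituting this back and collecting the constants $\frac{m}{2}\cdot\frac{p}{2}\cdot(m-1)$ yields the claimed bound $\frac{pm(m-1)}{4}\sum_{i=1}^m\sigma_i^{2p-2}$.

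I do not anticipate any genuine obstacle: the substantive content has been front-loaded into Lemmas~\ref{chebyshev-2} and~\ref{bernoulli} (the Chebyshev-type rearrangement and the Bernoulli-type pointwise estimate, respectively), and what remains is only their composition together with the index count. The one point requiring a moment of care is to confirm that the admissible ranges of $p$ in the two lemmas both cover $[1,2]$, which they do, and that distinctness of the $\sigma_i$ keeps every difference quotient well defined.
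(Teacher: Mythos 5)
Your proof is correct and follows exactly the paper's own argument: apply Lemma~\ref{chebyshev-2}, then Lemma~\ref{bernoulli} termwise, and finish with the observation that $\sum_{i<j}(\sigma_i^{2p-2}+\sigma_j^{2p-2})=(m-1)\sum_{i=1}^m\sigma_i^{2p-2}$. Nothing to add.
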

\begin{proof}
Combining Lemmas \ref{bernoulli} and \ref{chebyshev-2}, we have
\begin{align*}
  \Big(\sum_{i=1}^m\sigma_i^p\Big)\sum_{i<j}\frac{\sigma_i^p-\sigma_j^p}{\sigma_i^2-\sigma_j^2}&\ge \frac{m}{2}\sum_{i<j}\frac{\sigma_i^{2p}-\sigma_j^{2p}}{\sigma_i^2-\sigma_j^2} \notag\\
  &\ge \frac{pm}{4}\sum_{i<j}(\sigma_i^{2p-2}+\sigma_j^{2p-2})=\frac{pm(m-1)}{4}\sum_{i=1}^m\sigma_i^{2p-2}.
\end{align*} 
\end{proof}

\end{document}